\documentclass[reqno]{amsart}

\usepackage{amsmath, amsfonts, amssymb}

\newtheorem{theorem}{Theorem}
\newtheorem{lemma}{Lemma}
\newtheorem{proposition}{Proposition}
\newtheorem{corollary}{Corollary}

\title{Presentations of Lower-Triangular Subgroups of $\operatorname{Aut}(F_n)$}
\author{C. E. Kofinas}

\begin{document}

\begin{abstract}
Let $F_n$ be the free group of rank $n\geq2$ with basis
$x_1,\ldots,x_n$. For
$1\leq j<i\leq n$, let $d_{i,j}$ and $e_{i,j}$ be the automorphisms
of $F_n$ defined by $d_{i,j}(x_i)=x_ix_j$ and
$e_{i,j}(x_i)=x_jx_i$, respectively, and fixing the remaining free
generators. Write $D_n=\langle d_{i,j}\mid 1\leq j<i\leq n\rangle$ and
$A_n^+=\langle d_{i,j},e_{i,j}\mid 1\leq j<i\leq n\rangle$. 
We prove that $D_n$ admits a presentation on the generators
$d_{r+1,r}$, $1\leq r\leq n-1$, with three families of relations
given by commutators of weights two, three, and four, respectively.
We extend this to a presentation of $A_n^+$ on the generators
$d_{r+1,r}$ and $e_{r+1,r}$, $1\leq r\leq n-1$.
These generating sets have minimum cardinality.
Moreover, the presentation of $A_n^+$ may be chosen so that every
defining relator is a single commutator.

\medskip

\noindent\textit{Keywords:}
Free groups, Nielsen automorphisms, lower-triangular automorphism
groups, finite presentations, commutator calculus.

\smallskip
\noindent\textit{MSC 2020:}
Primary 20F05; Secondary 20F12, 20F28.
\end{abstract}

\maketitle

\section{Introduction}

Let $F_n$ be the free group of rank $n\geq2$ with basis
$x_1,\ldots,x_n$. For $1\leq j<i\leq n$, let $d_{i,j}$ and $e_{i,j}$
be the automorphisms of $F_n$ defined by
\[
d_{i,j}(x_i)=x_ix_j,
\qquad
e_{i,j}(x_i)=x_jx_i,
\]
and fixing the other basis elements. Set
\[
D_n=\langle d_{i,j}\mid 1\leq j<i\leq n\rangle,
\qquad
D_n^{\ell}=\langle e_{i,j}\mid 1\leq j<i\leq n\rangle,
\qquad
A_n^+=\langle D_n,D_n^{\ell}\rangle.
\]
We call $d_{i,j}$ and $e_{i,j}$ adjacent if $i=j+1$, and
non-adjacent if $i\geq j+2$.

Erofeev and Roman'kov studied unitriangular automorphism groups of
relatively free groups and obtained, in particular, a normal form and
a presentation~\cite[Theorem~A and relations~(7)]{er}. In the
free-group case their group is $D_n^{\ell}$. The lower central series of
$D_n$ is determined in~\cite{kofinas-lcs}.
Satoh studied the lower-triangular IA-automorphism group $IA_n^+$ in~\cite{sat1} 
and later gave a normal form and a finite presentation of $A_n^+$ in
\cite[Lemma~3.1 and Theorem~3.2]{sat2}. Gersten gave a presentation
of the special automorphism group of $F_n$ in terms of Nielsen
automorphisms~\cite[Theorem~2.8]{gersten}. To our knowledge,
presentations of $D_n$ and $A_n^+$ on the adjacent Nielsen generators
have not previously been given. For presentations and further results
concerning related subgroups of $\operatorname{Aut}(F_n)$, see
\cite{mccool74,mccool86,cpvw,suw}.

The images of $D_n$, $D_n^{\ell}$ and $A_n^+$ on the abelianization
of $F_n$ coincide with the group $\Lambda_n$ of integral
lower-unitriangular matrices. Magnus obtained a presentation and a
normal form for $\Lambda_n$ in~\cite{mag}. 
Biss and Dasgupta~\cite{bd} later gave a presentation of the integral
upper-unitriangular group using the matrices with a $1$ in position
$(i,i+1)$ and zeros in all other off-diagonal positions,
$1\leq i\leq n-1$, as generators. This motivates our choice of
generators.

Every non-adjacent $d_{i,j}$ and $e_{i,j}$ can be expressed
recursively as an iterated commutator of adjacent Nielsen
automorphisms. Thus the presentations of Erofeev--Roman'kov and
Satoh can be transformed into presentations on these generators by
Tietze transformations. For Tietze transformations of group
presentations, see \cite{johnson}. 
Direct elimination, however, produces relations involving recursively
defined iterated commutators whose complexity increases with $n$.
Our first main result gives a presentation of $D_n$ with only three
families of defining relations. The generators are $d_{r+1,r}$,
$1\leq r\leq n-1$, and the three families consist of commutators 
of weights two, three, and four, respectively.

The presentation of $D_n$ is then extended to $A_n^+$. The relations
among the generators $d_{r+1,r}$ are precisely the defining relations
for $D_n$, while the corresponding relations among the generators
$e_{r+1,r}$ are obtained from those for $D_n$ by conjugation. We prove
that three additional families of mixed relations, involving at most
three of these generators, are sufficient. This gives a presentation
of $A_n^+$ on the generators $d_{r+1,r}$ and $e_{r+1,r}$,
$1\leq r\leq n-1$.

In both cases, the generating sets displayed above have minimum
cardinality. Moreover, the presentation of $A_n^+$ may be chosen so that every
defining relator is a single commutator.

\section{Preliminaries and notation}\label{sec-prelim}

\subsection{Notation}

Let $G$ be a group. For $x,y\in G$, write $x^y=y^{-1}xy$. The commutator of $x$ and $y$
is
\[
(x,y)=x^{-1}y^{-1}xy=x^{-1}x^y.
\]
Thus $(x,y)^{-1}=(y,x)$. For $r\geq3$, we use the left-normed
convention
\[
(x_1,\ldots,x_r)=((x_1,\ldots,x_{r-1}),x_r).
\]
Given $g_1,\ldots,g_t\in G$, we assign weight one to each symbol
$g_i^{\pm1}$. Recursively, if the commutator expressions $u$ and $v$
have weights $p$ and $q$, respectively, then $(u,v)$ has weight
$p+q$. Thus the weight is the number of occurrences of the symbols
$g_i^{\pm1}$, counted with repetitions, in the commutator expression.

For $x,y,z\in G$, we use the standard identities
\[
(xy,z)=(x,z)^y(y,z),
\qquad
(x,yz)=(x,z)(x,y)^z,
\]
\[
(x^{-1},y)=(y,x)^{x^{-1}},
\qquad
(x,y)^z=(x^z,y^z),
\]
and the Hall--Witt identity
\[
(x,y,z^x)(z,x,y^z)(y,z,x^y)=1.
\]

We denote the derived subgroup of $G$ by $G'$ and its abelianization
$G/G'$ by $G^{\mathrm{ab}}$. If $g_1,\ldots,g_t\in G$, then
$\langle g_1,\ldots,g_t\rangle$ denotes the subgroup generated by
these elements. If $A,B\leq G$, then $\langle A,B\rangle$ denotes
the subgroup generated by $A$ and $B$. We say that $A$ centralizes $B$ if
$(a,b)=1$ for every $a\in A$ and $b\in B$.

We compose automorphisms as functions, so
$(\alpha\beta)(x)=\alpha(\beta(x))$.

\subsection{The groups $D_m$ and $A_m^+$}

For $1\leq j<i\leq n$, let $d_{i,j},e_{i,j}\in
\operatorname{Aut}(F_n)$ be defined by
\[
d_{i,j}(x_i)=x_ix_j,
\qquad
e_{i,j}(x_i)=x_jx_i,
\]
and
\[
d_{i,j}(x_k)=e_{i,j}(x_k)=x_k
\qquad (k\neq i).
\]
For $m=2,\ldots,n$, set
\[
U_m=\langle d_{m,1},\ldots,d_{m,m-1}\rangle,
\qquad
H_m=\langle e_{m,1},\ldots,e_{m,m-1}\rangle,
\qquad
W_m=\langle U_m,H_m\rangle,
\]
and
\[
D_m=\langle U_2,\ldots,U_m\rangle,
\qquad
D_m^{\ell}=\langle H_2,\ldots,H_m\rangle,
\qquad
A_m^+=\langle D_m,D_m^{\ell}\rangle.
\]
All these groups are subgroups of $\operatorname{Aut}(F_n)$ and fix
$x_{m+1},\ldots,x_n$.

Let $\theta\in\operatorname{Aut}(F_n)$ be defined by
$\theta(x_r)=x_r^{-1}$ for $r=1,\ldots,n$. Then
$\theta d_{i,j}\theta^{-1}=e_{i,j}$, so
$\theta D_m\theta^{-1}=D_m^{\ell}$ and
$\theta U_m\theta^{-1}=H_m$.
The group $D_m^{\ell}$ is the unitriangular automorphism group studied
by Erofeev and Roman'kov \cite{er}. In Satoh's notation,
$d_{i,j}=E_{ij}$ and $e_{i,j}=E_{i^{-1}j}^{-1}$, and $A_m^+$ is the
lower-triangular automorphism group considered in
\cite[Section~3]{sat2}.

We shall use the following known structure results. The freeness of
$H_m$ and the decomposition of $D_m^{\ell}$ follow from
\cite[Theorem~A]{er}. The assertions for $U_m$ and $D_m$ 
follow by conjugation with
$\theta$. The direct-product decomposition of
$W_m$ and the decomposition of $A_m^+$ follow from Satoh's normal
form~\cite[Lemma~3.1]{sat2}.

\begin{proposition}\label{triangular-row-decompositions}
For every $2\leq m\leq n$, the groups $U_m$ and $H_m$ are free of
rank $m-1$, freely generated by
$d_{m,1},\ldots,d_{m,m-1}$ and
$e_{m,1},\ldots,e_{m,m-1}$, respectively, and
\[
W_m=U_m\times H_m.
\]
For $3\leq m\leq n$,
\[
D_m=U_m\rtimes D_{m-1},
\qquad
D_m^{\ell}=H_m\rtimes D_{m-1}^{\ell},
\qquad
A_m^+=W_m\rtimes A_{m-1}^+.
\]
\end{proposition}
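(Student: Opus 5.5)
The plan is to verify everything directly from the action on $F_n$, using the cited normal forms only where injectivity or uniqueness is actually needed. Start with the rows. For $g\in U_m$, every generator fixes $x_k$ for $k\neq m$ and sends $x_m$ to $x_m w$ with $w\in F_{m-1}=\langle x_1,\ldots,x_{m-1}\rangle$. If $\alpha(x_m)=x_m u$ and $\beta(x_m)=x_m v$, then
\[
(\alpha\beta)(x_m)=\alpha(x_m v)=x_m u v,
\]
since $\alpha$ fixes $F_{m-1}$. So $g\mapsto w$ is a homomorphism from $U_m$ into $F_{m-1}$, and it is injective because $g$ is determined by $w$. This homomorphism sends $d_{m,j}$ to $x_j$, so $U_m\cong F_{m-1}$ is free with basis $d_{m,1},\ldots,d_{m,m-1}$. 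In the same way $H_m$ consists of the maps $x_m\mapsto w x_m$, and its elements compose as $w'w$, i.e.\ anti-isomorphically, which still gives a free group; alternatively one can simply conjugate by $\theta$.

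For $W_m=U_m\times H_m$, note that the maps $x_m\mapsto u x_m v$ with $u,v\in F_{m-1}$ form a group. Left and right multiplications commute, so $U_m$ and $H_m$ centralize each other. Their intersection is trivial: $u x_m = x_m v$ in $F_m$ forces $u=v=1$, since otherwise $x_m$ would cancel against a reduced word in $F_{m-1}$. Hence $W_m=U_mH_m\cong U_m\times H_m$.

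For the semidirect decompositions, observe that $D_{m-1}$ and $A_{m-1}^+$ fix $x_m$ and preserve $F_{m-1}$. For $\varphi\in A_{m-1}^+$ and $g\in W_m$ given by $x_m\mapsto u x_m v$, the conjugate $\varphi g\varphi^{-1}$ fixes $x_k$ for $k<m$ and sends $x_m\mapsto\varphi(u)\,x_m\,\varphi(v)$. Thus it lies in $W_m$, and similarly conjugation preserves $U_m$ and $H_m$. So $W_m$ is normalized by $A_{m-1}^+$, and since $A_m^+$ is generated by $W_m$ and $A_{m-1}^+$, we get $A_m^+=W_mA_{m-1}^+$ with $W_m$ normal.

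The intersection is trivial by restriction: an element of $A_{m-1}^+$ fixes $x_m$, while an element of $W_m$ fixing $x_m$ has $u x_m v=x_m$, so $u=v^{-1}$ in $F_{m-1}$; but $u x_m u^{-1}=x_m$ forces $u=1$. The same argument gives $D_m=U_m\rtimes D_{m-1}$ and $D^\ell_m=H_m\rtimes D^\ell_{m-1}$. This step, normality together with the trivial intersection, is the only place where care is needed. It can be cross-checked against \cite[Theorem~A]{er} and \cite[Lemma~3.1]{sat2}.
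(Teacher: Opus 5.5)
Your proof is correct, and it takes a different route from the paper. The paper gives no argument of its own. It imports the freeness of $H_m$ and the decomposition $D_m^{\ell}=H_m\rtimes D_{m-1}^{\ell}$ from the Erofeev--Roman'kov normal form \cite[Theorem~A]{er}. It transfers these to $U_m$ and $D_m$ by conjugating with $\theta$. It takes $W_m=U_m\times H_m$ and $A_m^+=W_m\rtimes A_{m-1}^+$ from Satoh's normal form \cite[Lemma~3.1]{sat2}.

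You instead identify $U_m$, $H_m$ and $W_m$ with the full groups of automorphisms $x_m\mapsto x_mv$, $x_m\mapsto ux_m$ and $x_m\mapsto ux_mv$ ($u,v\in F_{m-1}$, other basis elements fixed). That is, you identify them with $F_{m-1}$, its opposite group, and their direct product. Normality then comes from the formula $\varphi g\varphi^{-1}\colon x_m\mapsto\varphi(u)\,x_m\,\varphi(v)$. Trivial intersection comes from reduced-word arguments in $F_m$: the retraction killing $x_m$ gives $uv=1$, and the centralizer of $x_m$ is $\langle x_m\rangle$.

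Although you say you would use the cited normal forms where injectivity is needed, your argument never actually uses them. Injectivity holds because an automorphism is determined by the images of the basis, and freeness comes from $F_{m-1}$ itself. So the closing cross-check is optional and the proof is fully self-contained. This avoids translating between the conventions of \cite{er}, \cite{sat2} and the present paper. It also shows explicitly that $A_{m-1}^+$ acts on $W_m$ through its natural action on $F_{m-1}$, which makes the conjugation formulas of Lemma~\ref{row-action-Aplus} transparent. The citation route is shorter on the page and brings complete normal forms, which the paper does not otherwise need.

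One wording fix: where you say that every generator of $U_m$ has the form $x_m\mapsto x_mw$, what you need is that the set of all such maps is a group. Your composition computation shows exactly this, so every element of $U_m$ has that form.
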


Erofeev and Roman'kov give the commutator relations for the
$e_{i,j}$ in~\cite[formulas~(7)]{er}. 
After rewriting those formulas in our commutator convention, we obtain
the relations for the $e_{i,j}$ below. 
The relations for the $d_{i,j}$ follow by conjugation with $\theta$.

\begin{lemma}\label{nielsen-commutator-relations}
Let $1\leq l<k<i\leq n$ and $1\leq j<i$. Then
\[
\begin{aligned}
(d_{i,j},d_{k,l})
&=
\begin{cases}
d_{i,l}^{-1},&j=k,\\
1,&j\neq k,
\end{cases}
&
(e_{i,j},e_{k,l})
&=
\begin{cases}
e_{i,l}^{-1},&j=k,\\
1,&j\neq k.
\end{cases}
\end{aligned}
\]
\end{lemma}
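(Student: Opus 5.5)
The plan is to verify both families of identities by evaluating each side on the basis $x_1,\ldots,x_n$, since an automorphism of $F_n$ is determined by its values on a basis. I will do the computation only for the $d$'s and then transfer it to the $e$'s using $\theta$. The hypotheses $1\le l<k<i$ and $j<i$ give $i\ne k$, $i\ne l$ and $k\ne l$. Consequently $d_{i,j}$ moves only $x_i$, $d_{k,l}$ moves only $x_k$, and all other basis elements are fixed by every factor of the commutator. So it suffices to evaluate
\[
g=(d_{i,j},d_{k,l})=d_{i,j}^{-1}d_{k,l}^{-1}d_{i,j}d_{k,l}
\]
on $x_i$ and $x_k$. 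Because maps compose as functions, $g(x)$ is obtained by applying $d_{k,l}$ first and $d_{i,j}^{-1}$ last. We also use $d_{i,j}^{-1}(x_i)=x_ix_j^{-1}$.

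\emph{Evaluation on $x_k$.} First, $d_{k,l}(x_k)=x_kx_l$. This word is fixed by $d_{i,j}$, since $k,l\neq i$. Then $d_{k,l}^{-1}$ returns it to $x_k$, and $d_{i,j}^{-1}$ fixes $x_k$. Hence $g(x_k)=x_k$ in all cases.

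\emph{Evaluation on $x_i$.} First, $d_{k,l}(x_i)=x_i$, and then $d_{i,j}(x_i)=x_ix_j$. Applying $d_{k,l}^{-1}$ gives $x_i\,d_{k,l}^{-1}(x_j)$.
\begin{itemize}
\item If $j\ne k$, then $x_j$ is fixed, so we get $x_ix_j$. Applying $d_{i,j}^{-1}$ gives $x_ix_j^{-1}x_j=x_i$, so $g=1$.
\item If $j=k$, we get $x_ix_kx_l^{-1}$. Applying $d_{i,k}^{-1}$ gives $x_ix_k^{-1}x_kx_l^{-1}=x_ix_l^{-1}$.
\end{itemize}
Together with $g(x_k)=x_k$ and $g(x_r)=x_r$ for $r\ne i,k$, the case $j=k$ shows $g=d_{i,l}^{-1}$, since $d_{i,l}^{-1}(x_i)=x_ix_l^{-1}$. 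This proves the $d$-identities.

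\emph{Transfer to the $e$'s.} Conjugation by $\theta$ is an automorphism of $\operatorname{Aut}(F_n)$, so it preserves commutators. Using $\theta d_{i,j}\theta^{-1}=e_{i,j}$, we obtain
\[
(e_{i,j},e_{k,l})=\theta(d_{i,j},d_{k,l})\theta^{-1},
\]
which equals $\theta d_{i,l}^{-1}\theta^{-1}=e_{i,l}^{-1}$ when $j=k$, and $1$ otherwise. Alternatively, one can repeat the computation directly with $e_{i,j}(x_i)=x_jx_i$; it is symmetric.

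The only real pitfall is bookkeeping. One must apply the composition convention $(\alpha\beta)(x)=\alpha(\beta(x))$ consistently, together with the left-to-right commutator convention $(x,y)=x^{-1}y^{-1}xy$. Reversing either one would produce $d_{i,l}$ instead of $d_{i,l}^{-1}$. I would therefore double-check the sign in the case $j=k$ by a small example, say $n=3$ with $(i,k,l)=(3,2,1)$.
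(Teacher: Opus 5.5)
Your proof is correct. The three evaluations ($x_k\mapsto x_k$, $x_i\mapsto x_i$ when $j\neq k$, and $x_i\mapsto x_ix_l^{-1}$ when $j=k$) all check out under the paper's conventions $(x,y)=x^{-1}y^{-1}xy$ and $(\alpha\beta)(x)=\alpha(\beta(x))$. Conjugation by $\theta$ then legitimately transfers the identities to the $e_{i,j}$.

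The paper takes a different route and does not compute at all. It takes the $e$-relations from Erofeev--Roman'kov's formulas~(7), rewritten in its own commutator convention, and obtains the $d$-relations by conjugating with $\theta$. Its transfer therefore runs in the opposite direction to yours.

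What each route buys:
\begin{itemize}
\item The citation is shorter. It also ties the identities to Erofeev--Roman'kov's structure theory of $D_n^{\ell}$, which the paper relies on anyway for Proposition~\ref{triangular-row-decompositions}.
\item Your direct check is self-contained. It removes the one delicate step in the paper's route, namely translating another author's commutator and composition conventions. It also matches the basis-evaluation style the paper itself uses in Lemmas~\ref{row-action-Aplus} and~\ref{Aplus-relations-valid}.
\end{itemize}

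One small correction to your closing remark. Flipping exactly one of the two conventions does not simply invert the answer; it yields a conjugate of $d_{i,l}^{\pm1}$. For instance, with $(x,y)=xyx^{-1}y^{-1}$ and the paper's composition rule, the case $j=k$ gives $x_i\mapsto x_ix_kx_l^{-1}x_k^{-1}$. Only flipping both conventions produces $d_{i,l}$. This does not affect your argument, which uses the paper's conventions consistently.
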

\begin{lemma}\label{adjacent-generators}
For $3\leq i\leq n$ and $1\leq j\leq i-2$,
\[
\begin{split}
d_{i,j}
&=(d_{j+1,j},d_{j+2,j+1},\ldots,d_{i,i-1}),\\
e_{i,j}
&=(e_{j+1,j},e_{j+2,j+1},\ldots,e_{i,i-1}).
\end{split}
\]
Moreover,
\[
\begin{split}
D_n
&=\langle d_{r+1,r}\mid 1\leq r\leq n-1\rangle,\\
D_n^{\ell}
&=\langle e_{r+1,r}\mid 1\leq r\leq n-1\rangle,\\
A_n^+
&=\langle d_{r+1,r},e_{r+1,r}\mid 1\leq r\leq n-1\rangle.
\end{split}
\]
\end{lemma}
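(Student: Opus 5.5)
The plan is to prove the iterated-commutator formula by induction on $i-j$, using only Lemma~\ref{nielsen-commutator-relations}. The generation statements will then follow at once.

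For $d$, the case $i-j=1$ is trivial, since $d_{j+1,j}$ is itself adjacent. Now suppose $i-j\geq 2$ and that
\[
d_{i-1,j}=(d_{j+1,j},\ldots,d_{i-1,i-2})
\]
(for $i-1=j+1$ this is just $d_{j+1,j}$). Then
\[
(d_{j+1,j},\ldots,d_{i,i-1})=(d_{i-1,j},d_{i,i-1})=(d_{i,i-1},d_{i-1,j})^{-1}.
\]
Apply Lemma~\ref{nielsen-commutator-relations} with first index pair $(i,i-1)$ and second pair $(k,l)=(i-1,j)$. The hypotheses hold: $l=j<k=i-1<i$ and $i-1<i$. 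Since the second subscript $i-1$ of the first factor equals $k$, the lemma gives $(d_{i,i-1},d_{i-1,j})=d_{i,j}^{-1}$. Inverting yields $d_{i,j}$, which completes the induction.

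The identical computation with $e$ in place of $d$, using the $e$-half of Lemma~\ref{nielsen-commutator-relations}, gives the formula for $e_{i,j}$. Alternatively, one can conjugate the $d$-formula by $\theta$, since conjugation by $\theta$ respects commutators and sends $d_{r+1,r}\mapsto e_{r+1,r}$.

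For generation, the first part shows that every $d_{i,j}$ lies in $\langle d_{r+1,r}\mid 1\leq r\leq n-1\rangle$. Hence $D_n$, which is generated by all $d_{i,j}$, equals this subgroup; the reverse inclusion is obvious. The same argument applies to $D_n^{\ell}$. Finally,
\[
A_n^+=\langle D_n,D_n^{\ell}\rangle=\langle d_{r+1,r},e_{r+1,r}\rangle.
\]

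There is no real obstacle. The only point needing care is the commutator convention. The left-normed bracket ends with $d_{i,i-1}$ on the right, so one must invert the commutator before applying the lemma, and that inversion turns $d_{i,j}^{-1}$ into $d_{i,j}$.
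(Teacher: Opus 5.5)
Your proof is correct and matches the paper's argument. Both obtain $d_{i,j}=(d_{i-1,j},d_{i,i-1})$ from Lemma~\ref{nielsen-commutator-relations}, iterate it (you make the induction on $i-j$ and the commutator inversion explicit), pass to $e_{i,j}$ by conjugation with $\theta$ (or the $e$-relations), and read off the generation statements.
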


\begin{proof}
For $3\leq i\leq n$ and $1\leq j\leq i-2$,
Lemma~\ref{nielsen-commutator-relations} gives
$d_{i,j}=(d_{i-1,j},d_{i,i-1})$.
Iterating this identity gives
\[
d_{i,j}
=(d_{j+1,j},d_{j+2,j+1},\ldots,d_{i,i-1}).
\]
The formula for $e_{i,j}$ follows by conjugation with $\theta$.

Thus
\[
\begin{aligned}
D_n
&=\langle d_{i,j}\mid 1\leq j<i\leq n\rangle =\langle d_{2,1},d_{3,2},\ldots,d_{n,n-1}\rangle,
\\
D_n^{\ell}
&=\langle e_{i,j}\mid 1\leq j<i\leq n\rangle =\langle e_{2,1},e_{3,2},\ldots,e_{n,n-1}\rangle.
\end{aligned}
\]
Since $A_n^+=\langle D_n,D_n^{\ell}\rangle$, the adjacent right and
left Nielsen automorphisms generate $A_n^+$.
\end{proof}

\section{A presentation of $D_n$}\label{sec-Dn-presentation}

\subsection{The defining relations of $D_n$}

\begin{theorem}\label{main-adjacent-presentation}
For every $n\geq2$, the group $D_n$ has a presentation with generators
$d_{2,1},d_{3,2},\ldots,d_{n,n-1}$
and the following defining relations:
\begin{enumerate}
\item[\textup{(R1)}]
$(d_{r+1,r},d_{s+1,s})=1$ for
$1\leq r<s\leq n-1$ and $s-r\geq2$;

\item[\textup{(R2)}]
$(d_{r+1,r},(d_{r+1,r},d_{r+2,r+1}))=1$ for
$1\leq r\leq n-2$;

\item[\textup{(R3)}]
$(d_{r+1,r},d_{r+2,r+1},
(d_{r+2,r+1},d_{r+3,r+2}))=1$ for
$1\leq r\leq n-3$.
\end{enumerate}
\end{theorem}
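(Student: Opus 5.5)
Let $G$ be the group given by the presentation in Theorem~\ref{main-adjacent-presentation}, with generators $a_r$ ($1\leq r\leq n-1$) mapping to $d_{r+1,r}$. My plan is to check that the relations hold in $D_n$, so that there is a surjection $\pi\colon G\to D_n$ by Lemma~\ref{adjacent-generators}, and then to prove that $\pi$ is injective by induction on $n$, using the decomposition $D_n=U_n\rtimes D_{n-1}$ from Proposition~\ref{triangular-row-decompositions}.

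\textbf{Validity of the relations in $D_n$.} By Lemma~\ref{nielsen-commutator-relations}, (R1) holds because $r+1\neq s$ and $s+1\neq r$. For (R2), we have $(d_{r+1,r},d_{r+2,r+1})=d_{r+2,r}$. This commutes with $d_{r+1,r}$ since neither row index equals the other's column index. For (R3), $(d_{r+1,r},d_{r+2,r+1})=d_{r+2,r}$ and $(d_{r+2,r+1},d_{r+3,r+2})=d_{r+3,r+1}$, and these commute for the same reason.

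\textbf{Injectivity.} In $G$, define elements $a_{i,j}$ recursively by the same iterated commutators as in Lemma~\ref{adjacent-generators}, so that $a_{i,j}=(a_{i-1,j},a_{i,i-1})$. The case $n=2$ is trivial, since $D_2\cong\mathbb Z$ is free on $d_{2,1}$. For the inductive step, let $G_{n-1}=\langle a_1,\dots,a_{n-2}\rangle$ and let $N$ be the subgroup generated by $a_{n,1},\dots,a_{n,n-1}$. I aim to show two things:
\begin{enumerate}
\item[(a)] $N$ is normalized by each $a_r^{\pm1}$;
\item[(b)] conjugation by $a_r$ acts on the $a_{n,j}$ through the formulas of Lemma~\ref{nielsen-commutator-relations}.
\end{enumerate}
Explicitly, (b) means $a_{n,j}^{a_r}=a_{n,j}$ unless $r=j-1$, and $a_{n,r+1}^{a_r}=a_{n,r+1}a_{n,r}^{\pm1}$ in the appropriate convention. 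Once (a) and (b) are established, $G=N\,G_{n-1}$. The induction hypothesis says that $G_{n-1}$ maps isomorphically onto $D_{n-1}$; here one checks that the relations of $G$ involving only $a_1,\dots,a_{n-2}$ are exactly those of the $(n-1)$-presentation, so there is a map from the $(n-1)$-group onto $G_{n-1}$ and the composite to $D_{n-1}$ is an isomorphism. Also, $N$ maps onto $U_n$, which is free on the images of the $a_{n,j}$, so $\pi|_N$ is injective. Now let $g=uh$ with $u\in N$ and $h\in G_{n-1}$, and suppose $\pi(g)=1$. Then $\pi(h)\in U_n\cap D_{n-1}=1$, so $h=1$, and then $u=1$. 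Hence $\pi$ is injective.

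\textbf{Main obstacle.} The hard part is proving (a) and (b) inside $G$ using only (R1)--(R3). I would prove these commutation rules by a double induction on $n-j$ and on $r$, in the following order:
\begin{enumerate}
\item[(i)] $a_r$ commutes with $a_{n,j}$ for $r\le j-2$. This uses (R1), since $a_{n,j}$ only involves $a_j,\dots,a_{n-1}$.
\item[(ii)] The key identity: $a_{j}$ commutes with $a_{n,j}$, and more generally the $a_{i,j}$ satisfy the full relations of Lemma~\ref{nielsen-commutator-relations}, established row by row.
\end{enumerate}
(R2) supplies the base cases $(a_r,a_{r+2,r})=1$, and (R3) supplies $(a_{r+2,r},a_{r+3,r+1})=1$. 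From these base cases, the Hall--Witt identity together with the expansions $(xy,z)=(x,z)^y(y,z)$ should propagate the relations $(a_{i,j},a_{k,l})=1$ for $j\neq k$, $l\neq i$, and $(a_{i,k},a_{k,l})=a_{i,l}^{-1}$. The latter holds for $k=i-1$ by definition, and for general $k$ it follows by induction via Hall--Witt applied to $a_{i,k}=(a_{i-1,k},a_{i,i-1})$. Once all of Erofeev--Roman'kov's relations (transported by $\theta$) are shown to hold among the $a_{i,j}$ in $G$, one could alternatively just invoke the known presentation of $D_n$ via Tietze transformations. The step requiring the most care is the Hall--Witt bookkeeping for the commutation of $a_{i,j}$ with $a_{k,l}$ in the case where the index ranges interleave, $j<l<i<k$. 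I expect this to need (R3) combined with an induction on $k-l$, and this is where I anticipate the main difficulty.
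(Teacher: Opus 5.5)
Your reduction is sound and follows the paper's outline: induction on $n$ through $D_n=U_n\rtimes D_{n-1}$, with the last-row automorphisms $d_{n,j}$ written as iterated commutators of adjacent generators. Your injectivity argument is correct, and it actually needs only (a). Once $N$ is normal, $G=N\,G_{n-1}$. Then $\pi|_N$ is injective because $U_n$ is free on the images of the $n-1$ generators of $N$, and $\pi|_{G_{n-1}}$ is injective by induction. The genuine gap is that (a), which is the whole content of the theorem, is never proved. Only case (i), $(a_{n,j},a_r)=1$ for $j\geq r+2$, follows from what you write. The cases $j\leq r$, above all $(a_{n,r},a_r)=1$ and $(a_{n,r-1},a_r)=1$, are exactly where (R2) and (R3) must be used. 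For these you say only that Hall--Witt ``should propagate'' the relations and that you anticipate the main difficulty there. Your left-normed words $a_{n,j}=(a_{n-1,j},a_{n-1})$ add a further problem. For $r\leq n-3$, even the nontrivial action $(a_{n,r+1},a_r)=a_{n,r}^{-1}$ is not a definition. It is a rebracketing identity, e.g.\ $(a_{n-3},(a_{n-2},a_{n-1}))=((a_{n-3},a_{n-2}),a_{n-1})$, and it holds in $D_n$ but is not yet known in $G$. Your fallback of deriving all Erofeev--Roman'kov relations among all $a_{i,j}$ asks for far more than is needed. It is precisely the complexity growing with $n$ that the theorem is meant to avoid.

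The paper closes the gap by working only with the last row, written right-normed: $u_{n-1}=a_{n-1}$ and $u_j=(a_j,u_{j+1})$. Then $(u_{r+1},a_r)=u_r^{-1}$ holds by definition, and the case $j\geq r+2$ is (R1). For the remaining cases it proves two consequences of Hall--Witt. First, if $(x,z)=(x,(x,y))=(x,y,(y,z))=1$, then $(x,y,z)=(x,(y,z))$. Second, if $(x,(x,y))=(y,(y,z))=(x,y,(y,z))=1$, then $(x,(y,z),y)=1$. Taking $x=a_j$, $y=a_{j+1}$, $z=u_{j+2}$, a descending induction on $j$ proves four statements simultaneously:
\begin{enumerate}
\item $(a_j,a_{j+1},u_{j+1})=1$, from (R3) and (R1), using $u_{j+1}=((a_{j+1},a_{j+2}),u_{j+3})$ from the previous step;
\item $u_j=((a_j,a_{j+1}),u_{j+2})$, by the first lemma;
\item $(u_j,a_j)=1$, by (R1) and (R2);
\item $(u_j,a_{j+1})=1$, by the second lemma together with $(a_{j+1},u_{j+1})=1$ from the previous step.
\end{enumerate}
The cases $j\leq r-2$ then follow because $a_r$ commutes with $a_j$ by (R1). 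These identities are exactly the action relations of the semidirect-product presentation. The paper deletes them by Tietze transformations; in your framework they would give the normality of $N$.
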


The commutators in~\textup{(R1)}, \textup{(R2)}, and
\textup{(R3)} have weights two, three, and four, respectively.

\begin{lemma}\label{Dn-relations-valid}
Let $n\geq2$. Then relations~\textup{(R1)}--\textup{(R3)} in
Theorem~\ref{main-adjacent-presentation} hold in $D_n$.
\end{lemma}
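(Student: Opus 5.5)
Each relation is verified directly from Lemma~\ref{nielsen-commutator-relations}, which computes $(d_{i,j},d_{k,l})$ for $l<k<i$ and $j<i$. We first use it to evaluate the inner commutators. Then we check that the outer commutator is trivial, either with the same lemma or by noting that both entries lie in a common free factor $U_m$ that is abelian on the relevant elements.

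For (R1), take $r<s$ with $s-r\geq 2$. Put $i=s+1$, $j=s$, $k=r+1$, $l=r$, so that $l<k<i$. Since $j=s\neq r+1=k$, Lemma~\ref{nielsen-commutator-relations} gives $(d_{s+1,s},d_{r+1,r})=1$. Inverting yields (R1).

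For (R2), first compute $(d_{r+1,r},d_{r+2,r+1})$. Lemma~\ref{nielsen-commutator-relations} with $i=r+2$, $j=r+1$, $k=r+1$, $l=r$ gives $(d_{r+2,r+1},d_{r+1,r})=d_{r+2,r}^{-1}$. Hence $(d_{r+1,r},d_{r+2,r+1})=d_{r+2,r}$, which is also the case $i=r+2$ of Lemma~\ref{adjacent-generators}. It then suffices to show $(d_{r+1,r},d_{r+2,r})=1$. Apply the lemma with $i=r+2$, $j=r$, $k=r+1$, $l=r$. Here $j=r\neq r+1=k$, so $(d_{r+2,r},d_{r+1,r})=1$, and (R2) follows. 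As a sanity check, directly: $d_{r+1,r}$ moves only $x_{r+1}$ and fixes $x_r$, while $d_{r+2,r}$ moves only $x_{r+2}$ by right multiplication by $x_r$, so the two automorphisms commute.

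For (R3), the computation above gives $(d_{r+1,r},d_{r+2,r+1})=d_{r+2,r}$ and $(d_{r+2,r+1},d_{r+3,r+2})=d_{r+3,r+1}$. So (R3) reduces to $(d_{r+2,r},d_{r+3,r+1})=1$. Apply the lemma with $i=r+3$, $j=r+1$, $k=r+2$, $l=r$, so that $l<k<i$. Since $j=r+1\neq r+2=k$, we get $(d_{r+3,r+1},d_{r+2,r})=1$, and inverting gives (R3).

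No step is a real obstacle. The only care needed is to match each commutator to the ordering hypothesis $l<k<i$ of Lemma~\ref{nielsen-commutator-relations}, using $(x,y)^{-1}=(y,x)$ to swap entries when necessary. For small $n$ the relevant families are empty (for example, (R3) requires $n\geq4$), so nothing further is needed.
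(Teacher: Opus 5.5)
Your proof is correct and is essentially the paper's own argument. You evaluate the inner commutators $(d_{r+1,r},d_{r+2,r+1})=d_{r+2,r}$ and $(d_{r+2,r+1},d_{r+3,r+2})=d_{r+3,r+1}$ via Lemma~\ref{nielsen-commutator-relations}, kill the outer commutators with the $j\neq k$ case of the same lemma (all your index choices satisfy $l<k<i$), and that is exactly what the paper does. The only blemish is your introductory aside about both entries lying in a common free factor $U_m$: you never use it, and it would not apply anyway, since for example $d_{r+1,r}\in U_{r+1}$ and $d_{r+2,r}\in U_{r+2}$ lie in different rows.
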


\begin{proof}
If $1\leq r<s\leq n-1$ and $s-r\geq2$, then
Lemma~\ref{nielsen-commutator-relations} gives
$(d_{s+1,s},d_{r+1,r})=1$. Hence
$(d_{r+1,r},d_{s+1,s})=1$. This proves~\textup{(R1)}.

For $1\leq r\leq n-2$, the same lemma gives
$(d_{r+1,r},d_{r+2,r+1})=d_{r+2,r}$ and
$(d_{r+2,r},d_{r+1,r})=1$. Therefore
\[
(d_{r+1,r},(d_{r+1,r},d_{r+2,r+1}))
=(d_{r+1,r},d_{r+2,r})=1.
\]
This proves~\textup{(R2)}.

Finally, for $1\leq r\leq n-3$, the same lemma gives
$(d_{r+2,r+1},d_{r+3,r+2})=d_{r+3,r+1}$ and
$(d_{r+3,r+1},d_{r+2,r})=1$. It follows that
\[
(d_{r+1,r},d_{r+2,r+1},
(d_{r+2,r+1},d_{r+3,r+2}))
=(d_{r+2,r},d_{r+3,r+1})=1.
\]
This proves~\textup{(R3)}.
\end{proof}

\subsection{Consequences of the Hall--Witt identity}

\begin{lemma}\label{hallwitt-rebracketing}
Let $G$ be a group and let $x,y,z\in G$. Suppose that
\[
(x,z)=1,\qquad (x,(x,y))=1,\qquad (x,y,(y,z))=1.
\]
Then
\[
(x,y,z)=(x,(y,z)).
\]
\end{lemma}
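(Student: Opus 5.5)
The plan is to specialize the Hall--Witt identity to the triple $x,y,z$ and use the three hypotheses to eliminate all but two of its factors.

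First, the hypothesis $(x,z)=1$ gives $z^x=z$ and $(z,x)=1$. In the identity
\[
(x,y,z^x)(z,x,y^z)(y,z,x^y)=1
\]
the first factor therefore becomes $(x,y,z)$. The middle factor is $(z,x,y^z)=(1,y^z)=1$. So the identity reduces to
\[
(x,y,z)=(y,z,x^y)^{-1}=\bigl(x^y,(y,z)\bigr),
\]
where the last equality uses $(u,v)^{-1}=(v,u)$ with $u=(y,z)$ and $v=x^y$.

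Next I would rewrite $x^y$ so that the remaining hypotheses apply. By the definition of the commutator, $x^y=x(x,y)$. The hypothesis $(x,(x,y))=1$ says that $x$ and $(x,y)$ commute, so also $x^y=(x,y)\,x$. This second ordering is the one to use, because expanding with $(ab,c)=(a,c)^b(b,c)$ then puts the unwanted conjugation on a trivial factor. Write $c=(y,z)$. Then
\[
\bigl(x^y,c\bigr)=\bigl((x,y)x,c\bigr)=\bigl((x,y),c\bigr)^x\,(x,c).
\]
The third hypothesis $(x,y,(y,z))=1$ says exactly that $((x,y),c)=1$, so the right-hand side equals $(x,c)=(x,(y,z))$. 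Combining this with the first step gives $(x,y,z)=(x,(y,z))$.

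The main obstacle is the choice of ordering in this second step. If one instead expands $x^y=x(x,y)$, the result is $(x,c)^{(x,y)}$, and removing that conjugation would require an extra commutation that is not among the hypotheses. Using the commutativity of $x$ and $(x,y)$ to place $(x,y)$ on the left avoids this, and the rest of the argument is routine.
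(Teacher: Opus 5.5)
Your proof is correct and follows the paper's argument. Both specialize Hall--Witt, use $(x,z)=1$ to kill the middle factor, and then dispose of $x^y$ using the second and third hypotheses. The only difference is that you first reorder $x^y=(x,y)x$. The paper instead expands $((y,z),x(x,y))=((y,z),(x,y))\,((y,z),x)^{(x,y)}$ and removes the conjugation because $(x,y)$ commutes with both $x$ and $(y,z)$.

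That same step shows your closing remark is slightly off. The commutation needed in the other ordering is not an extra assumption; it follows at once from the second and third hypotheses.
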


\begin{proof}
The Hall--Witt identity gives
\[
(x,y,z^x)(z,x,y^z)(y,z,x^y)=1.
\]
Since $(x,z)=1$, we have $z^x=z$ and $(z,x)=1$, so this reduces to
\[
(x,y,z)(y,z,x^y)=1.
\]
Write $A=(y,z)$ and $B=(x,y)$. Since $x^y=xB$,
\[
(y,z,x^y)=(A,xB)=(A,B)(A,x)^B.
\]
The third hypothesis shows that $A$ and $B$ commute, while the second
shows that $B$ commutes with $x$. Hence $B$ commutes with $(A,x)$,
and therefore
\[
(y,z,x^y)=(A,x)=(y,z,x).
\]
Thus $(x,y,z)(y,z,x)=1$, and hence $(x,y,z)=(x,(y,z))$.
\end{proof}
\begin{lemma}\label{local-four-criterion}
Let $G$ be a group and let $x,y,z\in G$. Suppose that
\[
(x,(x,y))=1,\qquad (y,(y,z))=1,\qquad (x,y,(y,z))=1.
\]
Then
\[
(x,(y,z),y)=1.
\]
\end{lemma}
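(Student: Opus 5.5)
The plan is to apply the Hall--Witt identity to the triple $x$, $A=(y,z)$, $y$. Then the three hypotheses should kill two of the three factors, and the surviving factor should differ from $(x,(y,z),y)$ only by a conjugating element that can be removed. Throughout, write $A=(y,z)$ and $B=(x,y)$. The hypotheses then say three things: $B$ commutes with $x$, $A$ commutes with $y$, and $A$ commutes with $B$. The goal is to show that $C=(x,A)$ commutes with $y$.

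Step 1. I apply the Hall--Witt identity from Section~\ref{sec-prelim} with the roles $(x,y,z)$ replaced by $(x,A,y)$:
\[
(x,A,y^x)\,(y,x,A^y)\,(A,y,x^A)=1.
\]
Since $(A,y)=1$, the third factor is $(1,x^A)=1$, and $A^y=A$. The second factor is then $((y,x),A)=(B^{-1},A)$. This equals $1$ because $A$ and $B$ commute. Hence $(C,y^x)=1$.

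Step 2. I rewrite $y^x=x^{-1}yx=y(y,x)=yB^{-1}$. Expanding with $(u,vw)=(u,w)(u,v)^w$ gives
\[
1=(C,yB^{-1})=(C,B^{-1})\,(C,y)^{B^{-1}}.
\]
The element $B$ commutes with both $x$ and $A$, so it commutes with $C=(x,A)$. Thus $(C,B^{-1})=1$, and therefore $(C,y)^{B^{-1}}=1$. Conjugating back gives $(C,y)=1$, which is exactly $(x,(y,z),y)=1$.

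I expect the main point needing care to be Step 2. The Hall--Witt identity naturally produces $y^x$ rather than $y$, and the correction term $B^{-1}$ must be shown to commute with $C$. This is where the hypothesis $(x,(x,y))=1$ enters, combined with $(x,y,(y,z))=1$. This mirrors the argument in Lemma~\ref{hallwitt-rebracketing}. The remaining manipulations are routine applications of the commutator identities listed in Section~\ref{sec-prelim}.
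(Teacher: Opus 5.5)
Your proposal is correct and is essentially the paper's proof: the same Hall--Witt instance $(x,A,y^x)(y,x,A^y)(A,y,x^A)=1$ kills the second and third factors. The conjugating factor in $y^x=yB^{-1}$ is then removed using that $B=(x,y)$ commutes with both $x$ and $A=(y,z)$. The only difference is that your letters $A$ and $C$ are interchanged relative to the paper's.
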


\begin{proof}
Write $B=(x,y)$, $C=(y,z)$, and $A=(x,C)$. The hypotheses give
\[
(x,B)=(y,C)=(B,C)=1.
\]
Apply the Hall--Witt identity in the form
\[
(x,C,y^x)(y,x,C^y)(C,y,x^C)=1.
\]
Since $(y,C)=1$, the last factor is trivial and $C^y=C$. Moreover,
$(y,x)=B^{-1}$ and $(B,C)=1$, so the middle factor is also trivial.
Hence $(x,C,y^x)=1$.

Now $y^x=yB^{-1}$, and therefore
\[
1=(A,yB^{-1})=(A,B^{-1})(A,y)^{B^{-1}}.
\]
Since $B$ commutes with both $x$ and $C$, it commutes with
$A=(x,C)$. Thus $(A,B^{-1})=1$, and consequently $(A,y)=1$, as
required.
\end{proof}

\begin{lemma}\label{tail-action-Dn}
Let $G$ be a group, let $m\geq3$, and let
$a_1,\ldots,a_{m-1}\in G$. Suppose that the following conditions
hold:
\begin{enumerate}
\item[\textup{(A1)}]
$(a_r,a_s)=1$ for
$1\leq r<s\leq m-1$ and $s-r\geq2$;

\item[\textup{(A2)}]
$(a_r,(a_r,a_{r+1}))=1$ for
$1\leq r\leq m-2$;

\item[\textup{(A3)}]
$(a_r,a_{r+1},(a_{r+1},a_{r+2}))=1$ for
$1\leq r\leq m-3$.
\end{enumerate}
Write $u_{m-1}=a_{m-1}$ and, for $j=m-2,\ldots,1$, define recursively
\[
u_j=(a_j,u_{j+1}).
\]
Then:
\begin{enumerate}
\item[\textup{(C1)}]
For $1\leq r\leq m-2$,
$(u_{r+1},a_r)=u_r^{-1}$ and $(u_j,a_r)=1$ whenever
$1\leq j\leq m-1$ and $j\neq r+1$;

\item[\textup{(C2)}]
$(a_j,a_{j+1},u_{j+1})=1$ for $1\leq j\leq m-3$;

\item[\textup{(C3)}]
$u_j=(a_j,a_{j+1},u_{j+2})$ for $1\leq j\leq m-3$.
\end{enumerate}
\end{lemma}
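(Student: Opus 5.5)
We argue by downward induction on $j$, proving (C1)–(C3) together for the indices involved. The model to keep in mind is $a_r=d_{r+1,r}$ and $u_j=d_{m,j}$. Then (C1) mirrors Lemma~\ref{nielsen-commutator-relations}, and (C3) expresses $d_{m,j}=(d_{j+2,j},d_{m,j+2})$. The tools are Lemma~\ref{hallwitt-rebracketing}, Lemma~\ref{local-four-criterion}, and the standard commutator identities.

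\emph{Step 1 (C2).} Apply Lemma~\ref{local-four-criterion} with $x=a_j$, $y=a_{j+1}$, $z=a_{j+2}$. Its hypotheses are (A2) for $j$ and $j+1$, together with (A3). This gives $(a_j,(a_{j+1},a_{j+2}),a_{j+1})=1$. What we actually want is $(a_j,a_{j+1},u_{j+1})=1$, so this only settles the first case.

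When $j+1=m-2$ we have $u_{j+1}=(a_{m-2},a_{m-1})$, and (C2) is exactly (A3). For smaller $j$ we have $u_{j+1}=(a_{j+1},u_{j+2})$. We want to show that $B=(a_j,a_{j+1})$ commutes with it. By (A1), $a_j$ commutes with $a_{j+2},\dots,a_{m-1}$, hence with $u_{j+2}$. So it suffices to show that $B$ commutes with $u_{j+2}$ and with $a_{j+1}$ modulo what is needed.

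A cleaner route is to prove (C3) first at level $j+1$ and then run (C2) by induction. Lemma~\ref{hallwitt-rebracketing} with $x=a_j$, $y=a_{j+1}$, $z=u_{j+2}$ needs three hypotheses. The first, $(a_j,u_{j+2})=1$, holds by (A1). The second, $(a_j,(a_j,a_{j+1}))=1$, is (A2). The third, $(a_j,a_{j+1},(a_{j+1},u_{j+2}))=1$, is precisely (C2) at $j$. That lemma then gives $(a_j,a_{j+1},u_{j+2})=(a_j,u_{j+1})=u_j$, which is (C3). So (C3) follows from (C2), and the real content is (C2).

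To prove (C2) at $j$, I would show that $B=(a_j,a_{j+1})$ commutes with $u_{j+1}$. The approach is to write $u_{j+1}=(a_{j+1},a_{j+2},u_{j+3})$ using (C3) at $j+1$, which is available by induction. Then apply Lemma~\ref{local-four-criterion} and Hall--Witt with $B$ against the pieces, using these facts:
\begin{itemize}
\item $B$ commutes with $(a_{j+1},a_{j+2})$, by (A3);
\item $B$ commutes with $u_{j+3}$, since $a_j$ and $a_{j+1}$ both commute with $u_{j+3}$ by (A1) and (C1) at higher indices.
\end{itemize}
If $B$ commutes with both $P=(a_{j+1},a_{j+2})$ and $u_{j+3}$, then it commutes with $(P,u_{j+3})=u_{j+1}$. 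That proves (C2).

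\emph{Step 2 (C1).} We need $(u_{r+1},a_r)=u_r^{-1}$, which is just the definition $u_r=(a_r,u_{r+1})$ inverted. Next, $(u_j,a_r)=1$ for $r\ge j+1$ follows since $u_j$ lies in the subgroup generated by $a_j,\dots,a_{m-1}$; this must be checked by induction. For $r\le j-2$, the claim follows from (A1), since $a_r$ commutes with every $a_s$ with $s\ge j$. For $r\ge j$ (that is, $j\ne r+1$ with $j\le r$), we need $u_j$ to commute with $a_r$.

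For $r=j$: $(u_j,a_j)=1$ follows by Lemma~\ref{hallwitt-rebracketing}-type reasoning from (A2) extended. Concretely, $u_j=(a_j,a_{j+1},u_{j+2})$ by (C3), and $a_j$ commutes with $(a_j,a_{j+1})$ and with $u_{j+2}$, so it commutes with $u_j$. For $m-2=j$, (A2) handles this directly.

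For $r>j$: induct on $j$ downward. $a_r$ commutes with $u_{j+1}$ when $r\ne j+2$ and $r\ne j+1$. The cases $r=j+1$ and $r=j+2$ need the (C3) expression, which makes $u_j$ a commutator of $(a_j,a_{j+1})$ with $u_{j+2}$. Then $a_{j+1}$ commutes with $u_{j+2}$ by induction, and it commutes with $(a_j,a_{j+1})$ via Lemma~\ref{local-four-criterion}-type symmetric arguments.

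\emph{Main obstacle.} The delicate point is organizing the simultaneous induction so that (C1) at higher indices feeds (C2) and then (C3). In particular, showing that $a_{j+1}$ and $a_{j+2}$ commute with $u_j$ requires the right Hall--Witt manipulation. I would first attempt those cases via Lemma~\ref{local-four-criterion}.
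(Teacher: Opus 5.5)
Your Step~1 is sound and matches the paper. (C2) at $j$ follows from (C3) at $j+1$, because $(a_j,a_{j+1})$ commutes with $(a_{j+1},a_{j+2})$ by (A3) and with $u_{j+3}\in\langle a_{j+3},\ldots,a_{m-1}\rangle$ by (A1). Then (C3) at $j$ follows from Lemma~\ref{hallwitt-rebracketing} with $z=u_{j+2}$.

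The gap is in Step~2, in the case $r=j+1$, that is, $(u_j,a_{j+1})=1$. This is the crux of (C1), since the cases $j\le r-2$ reduce to it by descending induction using $(a_k,a_r)=1$ for $k\le r-2$. Your argument claims that $a_{j+1}$ commutes with both $u_{j+2}$ and $(a_j,a_{j+1})$, and both claims are false:
\begin{enumerate}
\item The first contradicts (C1) itself, which gives $(u_{j+2},a_{j+1})=u_{j+1}^{-1}$.
\item The second already fails in the model you invoke. There $a_{j+1}=d_{j+2,j+1}$ and $(a_j,a_{j+1})=d_{j+2,j}$ are distinct free generators of $U_{j+2}$.
\end{enumerate}
So the expression $u_j=((a_j,a_{j+1}),u_{j+2})$ from (C3) is of no use here. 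No ``symmetric'' use of Lemma~\ref{local-four-criterion} can supply commutations that do not hold.

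What is missing is to carry $(u_j,a_j)=1$ and $(u_j,a_{j+1})=1$ through the same descending induction as (C2) and (C3). Then apply Lemma~\ref{local-four-criterion} with $x=a_j$, $y=a_{j+1}$, $z=u_{j+2}$. Its hypotheses are:
\begin{enumerate}
\item (A2) at $j$;
\item $(a_{j+1},(a_{j+1},u_{j+2}))=(a_{j+1},u_{j+1})=1$, which is the inductive hypothesis (or (A2) when $j=m-3$);
\item $(a_j,a_{j+1},(a_{j+1},u_{j+2}))=(a_j,a_{j+1},u_{j+1})=1$, which is (C2) at $j$.
\end{enumerate}
Its conclusion $(a_j,(a_{j+1},u_{j+2}),a_{j+1})=1$ is exactly $(u_j,a_{j+1})=1$. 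Your first application of that lemma, with $z=a_{j+2}$, is precisely the base case $j=m-3$ of this relation, not a statement about (C2).

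There are also two smaller slips. First, deriving $(u_j,a_r)=1$ from $u_j\in\langle a_j,\ldots,a_{m-1}\rangle$ works for $r\le j-2$, not for $r\ge j+1$. Second, the case $r=j+2$ needs no special treatment: $a_{j+2}$ commutes with $a_j$ by (A1) and with $u_{j+1}$ by induction, since the only index excluded by (C1) for $u_{j+1}$ is $r=j$. The route through (C3) would in fact fail there too, since $a_{j+2}$ does not commute with $(a_j,a_{j+1})$ in $D_n$.
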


\begin{proof}
We first prove~\textup{(C2)} and~\textup{(C3)}, together with the
commutativity relations
\begin{equation}\label{eq-tail-commuting}
\begin{gathered}
(u_j,a_j)=1\quad (1\leq j\leq m-2),
\\
(u_j,a_{j+1})=1\quad (1\leq j\leq m-3).
\end{gathered}
\end{equation}
Since $u_{m-2}=(a_{m-2},a_{m-1})$, condition~\textup{(A2)} shows
that $u_{m-2}$ commutes with $a_{m-2}$. If $m=3$, this gives
$(u_1,a_1)=1$, while $(u_2,a_1)=u_1^{-1}$ follows from
$u_1=(a_1,u_2)$. This proves the case $m=3$.

Assume that $m\geq4$. For $j=m-3,\ldots,1$, we prove the identities
in~\textup{(C2)} and~\textup{(C3)} and the two commutativity
relations in~\eqref{eq-tail-commuting} by descending induction.
For $j=m-3$, we have
$u_{j+1}=(a_{j+1},a_{j+2})$, so condition~\textup{(A3)} gives
$(a_j,a_{j+1},u_{j+1})=1$. Lemma~\ref{hallwitt-rebracketing}
applies with $x=a_j$, $y=a_{j+1}$ and $z=a_{j+2}$. Its first
hypothesis follows from condition~\textup{(A1)} by taking $r=j$
and $s=j+2$, its second hypothesis follows from
condition~\textup{(A2)} with $r=j$, and its third hypothesis follows
from condition~\textup{(A3)} with $r=j$. Hence
\[
u_j=(a_j,a_{j+1},u_{j+2}).
\]
By condition~\textup{(A2)}, $a_j$ commutes with
$(a_j,a_{j+1})$, and by condition~\textup{(A1)}, it commutes with
$u_{j+2}=a_{j+2}$. Therefore $(u_j,a_j)=1$.

Lemma~\ref{local-four-criterion} also applies with $x=a_j$,
$y=a_{j+1}$ and $z=a_{j+2}$. Its first and second hypotheses follow
from condition~\textup{(A2)} with $r=j$ and $r=j+1$, respectively,
while its third hypothesis follows from condition~\textup{(A3)}
with $r=j$. Therefore $(u_j,a_{j+1})=1$.

Now let $1\leq j\leq m-4$ and assume that the identities
in~\textup{(C2)} and~\textup{(C3)} and the two commutativity
relations hold with $j+1$ in place of $j$. By the induction
hypothesis,
$u_{j+1}=(a_{j+1},a_{j+2},u_{j+3})$. Condition~\textup{(A3)} gives
\[
(a_j,a_{j+1},(a_{j+1},a_{j+2}))=1.
\]
Moreover, by condition~\textup{(A1)},
$\langle a_j,a_{j+1}\rangle$ centralizes
$\langle a_{j+3},\ldots,a_{m-1}\rangle$, which contains
$u_{j+3}$. Thus $(a_j,a_{j+1})$ commutes with both
$(a_{j+1},a_{j+2})$ and $u_{j+3}$, and hence with $u_{j+1}$.
Therefore
\[
(a_j,a_{j+1},u_{j+1})=1.
\]

Apply Lemma~\ref{hallwitt-rebracketing} with
$x=a_j$, $y=a_{j+1}$ and $z=u_{j+2}$. The recursive definition gives
$u_{j+2}\in\langle a_{j+2},\ldots,a_{m-1}\rangle$.
Condition~\textup{(A1)} shows that $a_j$ centralizes this subgroup,
and hence $(a_j,u_{j+2})=1$. The second hypothesis follows from
condition~\textup{(A2)} with $r=j$, and the third hypothesis is the
identity just proved. Thus
\[
u_j=(a_j,u_{j+1})
   =(a_j,a_{j+1},u_{j+2}).
\]
Condition~\textup{(A2)} shows that $a_j$ commutes with
$(a_j,a_{j+1})$, while condition~\textup{(A1)} shows that it
commutes with $u_{j+2}$. Hence $(u_j,a_j)=1$.

Finally, apply Lemma~\ref{local-four-criterion} with
$x=a_j$, $y=a_{j+1}$ and $z=u_{j+2}$. Its first hypothesis follows
from condition~\textup{(A2)} with $r=j$. Its second hypothesis is
$(a_{j+1},u_{j+1})=1$, which follows
from~\eqref{eq-tail-commuting} with index $j+1$, and its third
hypothesis is $(a_j,a_{j+1},u_{j+1})=1$, proved above. It follows
that $(u_j,a_{j+1})=1$. This completes the descending induction.

We now prove~\textup{(C1)}. If $j=r+1$, then
$u_r=(a_r,u_{r+1})$, and therefore
$(u_{r+1},a_r)=u_r^{-1}$. If $j\geq r+2$, then
$u_j\in\langle a_j,\ldots,a_{m-1}\rangle$, and
condition~\textup{(A1)} shows that $a_r$ centralizes this subgroup.
Thus $(u_j,a_r)=1$.

The case $j=r$ follows from the first identity
in~\eqref{eq-tail-commuting}. If $r\geq2$, the case $j=r-1$ follows
from the second identity. Finally, suppose that
$1\leq j\leq r-2$. We proceed by descending induction, starting with
$(u_{r-1},a_r)=1$. If $(u_{k+1},a_r)=1$ for some
$1\leq k\leq r-2$, then $(a_k,a_r)=1$ by
condition~\textup{(A1)}. Consequently, $a_r$ commutes with both
entries in $u_k=(a_k,u_{k+1})$, and hence $(u_k,a_r)=1$.
This proves~\textup{(C1)}.
\end{proof}

\subsection{The presentation theorem for $D_n$}

\begin{proof}[Proof of Theorem~\ref{main-adjacent-presentation}]
By Lemma~\ref{adjacent-generators}, the elements
$d_{2,1},d_{3,2},\ldots,d_{n,n-1}$ generate $D_n$, and by
Lemma~\ref{Dn-relations-valid}, they satisfy
relations~\textup{(R1)}--\textup{(R3)}. It remains to prove that
these relations are sufficient. We proceed by induction on $n$.

For $n=2$, the presentation has one generator and no defining
relations. Since $D_2=U_2$ is infinite cyclic, generated by
$d_{2,1}$, the result follows.

Assume that $n\geq3$ and that the theorem holds for $D_{n-1}$. For
$1\leq r\leq n-1$, write $a_r=d_{r+1,r}$, and for
$1\leq j\leq n-1$, write $u_j=d_{n,j}$. By
Proposition~\ref{triangular-row-decompositions},
\[
D_n=U_n\rtimes D_{n-1},
\]
where $U_n$ is free on $u_1,\ldots,u_{n-1}$.
Lemma~\ref{nielsen-commutator-relations} gives, for
$1\leq r\leq n-2$ and $1\leq j\leq n-1$,
\begin{equation}\label{eq-Dn-action}
(u_j,a_r)=
\begin{cases}
u_r^{-1},&j=r+1,\\
1,&j\neq r+1.
\end{cases}
\end{equation}
The standard presentation of a semidirect product therefore gives a
presentation of $D_n$ by adjoining to the presentation of
$D_{n-1}$ the free generators $u_1,\ldots,u_{n-1}$ and the
relations~\eqref{eq-Dn-action}.

For each $1\leq r\leq n-2$, the nontrivial relation
in~\eqref{eq-Dn-action} is equivalent to
\[
u_r=(a_r,u_{r+1}).
\]
We use these relations successively to eliminate
$u_{n-2},u_{n-3},\ldots,u_1$,
and rename $u_{n-1}$ as $a_{n-1}$. After these eliminations, we use
$u_j$ to denote the recursive words
\[
u_{n-1}=a_{n-1},
\qquad
u_j=(a_j,u_{j+1})
\quad (1\leq j\leq n-2).
\]
The relations remaining from~\eqref{eq-Dn-action} are
\[
(u_j,a_r)=1
\qquad
(1\leq r\leq n-2,\ 1\leq j\leq n-1,\ j\neq r+1).
\]

The preceding eliminations are Tietze transformations, so the current
presentation still defines $D_n$. Consequently, every relation among
its generators that holds in $D_n$ is a consequence of the current
relators and may be added by a Tietze transformation. By
Lemma~\ref{Dn-relations-valid}, we may therefore add all the relations
in~\textup{(R1)}--\textup{(R3)} that involve $a_{n-1}$. All other
relations in~\textup{(R1)}--\textup{(R3)} already occur in the
presentation of $D_{n-1}$.

The group defined by the current presentation now satisfies
relations~\textup{(R1)}--\textup{(R3)}. With $m=n$, these are
precisely conditions~\textup{(A1)}--\textup{(A3)} of
Lemma~\ref{tail-action-Dn}. Conclusion~\textup{(C1)} of that lemma
shows that all the remaining relations from~\eqref{eq-Dn-action}
follow from~\textup{(R1)}--\textup{(R3)}. They may therefore be
deleted by Tietze transformations. What remains is exactly the
presentation in the statement.
\end{proof}

\begin{corollary}\label{minimal-generating-set}
For every $n\geq2$,
$D_n^{\mathrm{ab}}\cong\mathbb Z^{n-1}$. Every generating set of
$D_n$ contains at least $n-1$ elements, and
$\{d_{2,1},d_{3,2},\ldots,d_{n,n-1}\}$
is a generating set of minimum cardinality.
\end{corollary}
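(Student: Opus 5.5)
The plan is to read off the abelianization directly from the presentation in Theorem~\ref{main-adjacent-presentation}. Every defining relator in \textup{(R1)}--\textup{(R3)} is a commutator, so it lies in the derived subgroup of the free group $F$ on the symbols $d_{2,1},\ldots,d_{n,n-1}$. If $N$ denotes the normal closure of these relators, then $N\leq F'$. Hence
\[
D_n^{\mathrm{ab}}\cong F/NF'=F/F'\cong\mathbb Z^{n-1},
\]
and the images of $d_{2,1},\ldots,d_{n,n-1}$ form a free abelian basis. For $n=2$ there are no relations, and $D_2$ is infinite cyclic, so this case is consistent.

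For the lower bound on the number of generators, suppose $S$ generates $D_n$. Its image then generates $D_n^{\mathrm{ab}}\cong\mathbb Z^{n-1}$. Tensoring with $\mathbb Q$ (or reducing modulo a prime $p$, which gives $(\mathbb Z/p)^{n-1}$), a spanning set of an $(n-1)$-dimensional vector space has at least $n-1$ elements. Therefore $|S|\geq n-1$.

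For minimality, Lemma~\ref{adjacent-generators} shows that the $n-1$ elements $d_{r+1,r}$ generate $D_n$. They are pairwise distinct, since their images in $D_n^{\mathrm{ab}}$ are distinct basis vectors. Hence this set has exactly $n-1$ elements, which attains the lower bound.

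There is essentially no obstacle here. The only point needing care is that every relator in \textup{(R1)}--\textup{(R3)} is a commutator, including the weight-three and weight-four iterated ones, so the abelianization of the presented group is free abelian on the generators. As an independent check, one could also map $D_n$ onto $\Lambda_n$ and then onto its abelianization $\mathbb Z^{n-1}$ via the superdiagonal entries. Under this map $d_{r+1,r}$ goes to the $r$-th basis vector, which gives the rank $n-1$ directly.
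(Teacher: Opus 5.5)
Your proof is correct and follows the paper's argument. Every relator in \textup{(R1)}--\textup{(R3)} is a commutator, so $D_n^{\mathrm{ab}}$ is free abelian on the images of the $d_{r+1,r}$, and the lower bound holds because any generating set maps onto a generating set of $\mathbb Z^{n-1}$. Your closing remark via $\Lambda_n$ goes a bit further than a check: combined with generation by the $d_{r+1,r}$ and the Hopfian property of $\mathbb Z^{n-1}$, it proves the whole corollary without using Theorem~\ref{main-adjacent-presentation}.
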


\begin{proof}
All the defining relations in
Theorem~\ref{main-adjacent-presentation} are commutator relations and
therefore impose no relations in the abelianization. Hence the images
of the elements $d_{2,1},d_{3,2},\ldots,d_{n,n-1}$
form a basis of $D_n^{\mathrm{ab}}$, and therefore
\[
D_n^{\mathrm{ab}}\cong\mathbb Z^{n-1}.
\]

The image of every generating set of $D_n$ generates
$D_n^{\mathrm{ab}}$. Since $\mathbb Z^{n-1}$ cannot be generated by
fewer than $n-1$ elements, every generating set of $D_n$ contains at
least $n-1$ elements. The set
$\{d_{2,1},d_{3,2},\ldots,d_{n,n-1}\}$
has $n-1$ elements and generates $D_n$, and hence has minimum
cardinality.
\end{proof}

\section{A presentation of the lower-triangular group $A_n^+$}
\label{sec-Aplus-presentation}

We now pass from $D_n$ to $A_n^+$. Recall that $\theta$ is the
automorphism of $F_n$ defined by
$\theta(x_i)=x_i^{-1}$ for $1\leq i\leq n$.
Conjugation by $\theta$ sends each $d_{i,j}$ to $e_{i,j}$ and maps
$D_n$ isomorphically onto $D_n^{\ell}$. For
$1\leq r\leq n-1$, write $a_r=d_{r+1,r}$ and
$b_r=e_{r+1,r}$. Thus the subgroup generated by the $a_r$ is $D_n$,
while the subgroup generated by the $b_r$ is $D_n^{\ell}$.

\subsection{The defining relations of $A_n^+$}

The relations involving only the $a_r$ are the defining relations
from Theorem~\ref{main-adjacent-presentation}, and the same relations
hold with the $b_r$ in place of the $a_r$. For $n\geq3$,
Proposition~\ref{triangular-row-decompositions} gives
\[
A_n^+=W_n\rtimes A_{n-1}^+,
\qquad
W_n=U_n\times H_n.
\]
Satoh's finite presentation uses all $n(n-1)$ elementary right and
left generators~\cite[Theorem~3.2]{sat2}. The following theorem gives
a presentation on the $2(n-1)$ adjacent generators, with three
families of mixed relations in addition to the two copies of the
relations for $D_n$.

\begin{theorem}\label{main-adjacent-presentation-Aplus}
For every $n\geq2$, write
\[
a_r=d_{r+1,r},\qquad b_r=e_{r+1,r}
\qquad (1\leq r\leq n-1).
\]
Then $A_n^+$ has a presentation with generators
\[
a_1,\ldots,a_{n-1},b_1,\ldots,b_{n-1}
\]
and defining relations
\[
\begin{aligned}
\textup{(S1)}\quad
&(a_r,a_s)=1
&& (1\leq r<s\leq n-1,\ s-r\geq2),
\\
&(a_r,(a_r,a_{r+1}))=1
&& (1\leq r\leq n-2),
\\
&(a_r,a_{r+1},(a_{r+1},a_{r+2}))=1
&& (1\leq r\leq n-3),
\\
&(b_r,b_s)=1
&& (1\leq r<s\leq n-1,\ s-r\geq2),
\\
&(b_r,(b_r,b_{r+1}))=1
&& (1\leq r\leq n-2),
\\
&(b_r,b_{r+1},(b_{r+1},b_{r+2}))=1
&& (1\leq r\leq n-3),
\\[1mm]
\textup{(S2)}\quad
&(a_r,b_s)=1
&& (1\leq r,s\leq n-1,\ |r-s|\neq1),
\\[1mm]
\textup{(S3)}\quad
&(a_r,a_{r+1})=(a_{r+1}^{-1},b_r)
&& (1\leq r\leq n-2),
\\
&(b_r,b_{r+1})=(b_{r+1}^{-1},a_r)
&& (1\leq r\leq n-2),
\\[1mm]
\textup{(S4)}\quad
&(a_r,a_{r+1},b_{r+1})=1
&& (1\leq r\leq n-2),
\\
&(b_r,b_{r+1},a_{r+1})=1
&& (1\leq r\leq n-2).
\end{aligned}
\]
\end{theorem}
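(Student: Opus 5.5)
We argue by induction on $n$, in parallel with the proof of Theorem~\ref{main-adjacent-presentation}. Validity of the relations is checked first. (S1) holds by Lemma~\ref{Dn-relations-valid} and conjugation by $\theta$. For (S2)--(S4) we need the mixed commutators $(d_{i,j},e_{k,l})$, which we compute directly on the basis. The key facts are the following. First, $d_{i,j}$ and $e_{k,l}$ commute unless $j=k$ or $l=i$. Second, $d_{i,j}$ commutes with $e_{i,j}$ (this is $W_m=U_m\times H_m$). Third, for $j=k$ the commutator $(d_{i,j},e_{j,l})$ is a product of $d_{i,l}$ and $e$-type elements $e_{i,l}$ with suitable signs.

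From these, (S2) with $r=s$ is $(d_{r+1,r},e_{r+1,r})=1$, and with $|r-s|\ge2$ it is disjoint support. For (S3) one evaluates $(a_{r+1}^{-1},b_r)$ on $x_{r+2}$. We have $a_{r+1}^{-1}(x_{r+2})=x_{r+2}x_{r+1}^{-1}$, and $b_r$ replaces $x_{r+1}$ by $x_rx_{r+1}$. The result should be $d_{r+2,r}=(a_r,a_{r+1})$. (S4) then says that $d_{r+2,r}$ commutes with $e_{r+2,r+1}$, which holds because $e_{r+2,r+1}$ only modifies $x_{r+2}$ by a left factor while $d_{r+2,r}$ modifies it by a right factor. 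The $b$-versions follow by conjugating with $\theta$.

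For sufficiency, the case $n=2$ is $W_2=\mathbb Z\times\mathbb Z$ with the relation $(a_1,b_1)=1$ from (S2). For $n\ge3$ we use $A_n^+=W_n\rtimes A_{n-1}^+$ with $W_n=U_n\times H_n$, where $U_n$ is free on $u_j=d_{n,j}$ and $H_n$ is free on $v_j=e_{n,j}$. The semidirect product presentation consists of the following pieces:
\begin{enumerate}
\item[\textup{(a)}] the presentation of $A_{n-1}^+$ from the induction hypothesis;
\item[\textup{(b)}] the relations $(u_j,v_k)=1$ for all $j,k$;
\item[\textup{(c)}] the action relations $(u_j,a_r)$, $(u_j,b_r)$, $(v_j,a_r)$, $(v_j,b_r)$, each expressed as an explicit word in $W_n$ for $r\le n-2$.
\end{enumerate}
The action of $a_r$ on the $u_j$ and the action of $b_r$ on the $v_j$ are given by Lemma~\ref{nielsen-commutator-relations}. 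The mixed actions are read off from the mixed commutator table; for instance $(u_{r+1},b_r)$ should be a word in $u_r^{\pm1}$ and $u_{r+1}$, together with similar $v$-terms. As in the $D_n$ case, we eliminate $u_{n-2},\dots,u_1$ via $u_j=(a_j,u_{j+1})$ and $v_{n-2},\dots,v_1$ via $v_j=(b_j,v_{j+1})$, and rename $u_{n-1}=a_{n-1}$ and $v_{n-1}=b_{n-1}$. We then add, by Tietze transformations, all relations (S1)--(S4) involving index $n-1$; this is legitimate since they hold in $A_n^+$. It remains to show that every leftover relation from (b) and (c) is a consequence of (S1)--(S4).

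The pure actions $(u_j,a_r)$ and $(v_j,b_r)$ follow from Lemma~\ref{tail-action-Dn} applied separately to the $a$'s and to the $b$'s. The main obstacle is the remaining mixed relations: $(u_j,v_k)=1$ together with the mixed actions $(u_j,b_r)$ and $(v_j,a_r)$. We would prove, by descending induction on $j$, a mixed analogue of Lemma~\ref{tail-action-Dn}. The base cases are $(a_{n-1},b_{n-1})=1$ from (S2), the formula for $(u_{n-1},b_{n-2})=(a_{n-1},b_{n-2})$ supplied by (S3) (rewritten via $(a_{n-2},a_{n-1})=(a_{n-1}^{-1},b_{n-2})$), and $(u_{n-2},v_{n-1})=1$ from (S4). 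The inductive step expands $u_j=(a_j,u_{j+1})$ and uses the commutator identities, Lemma~\ref{hallwitt-rebracketing}, and the far-commutation in (S2) to push $b_r$ or $v_k$ through.

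We expect the case $r=j$ or $j-1$ for $(u_j,b_r)$ to require a direct Hall--Witt computation analogous to Lemma~\ref{local-four-criterion}, using (S3) and (S4) in place of (A2) and (A3). We would first try to establish $(u_j,v_k)=1$ for $k\ne j$ by centralizer arguments. The case $k=j$ would then follow by conjugating $(u_{j+1},v_{j+1})=1$ by $a_j$ or $b_j$ and using the already-established action formulas.
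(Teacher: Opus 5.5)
Your framework is the paper's. It has the validity checks, induction through $A_n^+=W_n\rtimes A_{n-1}^+$, elimination of the $u_j,v_j$ via $u_j=(a_j,u_{j+1})$ and $v_j=(b_j,v_{j+1})$, adjoining the relations with index $n-1$, and Lemma~\ref{tail-action-Dn} for the pure actions. But the proposal stops exactly where the real work begins. The mixed action relations and $(u_j,v_k)=1$ are only announced (``we would prove'', ``we expect''), so sufficiency is not established.

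Moreover, you misdescribe the relations you would have to derive. The mixed commutator $(d_{i,j},e_{j,l})$ equals $d_{i,j}^{-1}d_{i,l}^{-1}d_{i,j}\in U_i$, not a product involving $e_{i,l}$, and $b_r$ normalizes $U_n$. The correct relations are $u_{r+1}^{b_r}=u_r^{-1}u_{r+1}$ and $u_j^{b_r}=u_j$ for $j\neq r+1$, with no $v$-terms, and symmetrically for $v_j^{a_r}$. They come from the direct computation $(d_{n,r+1}^{-1},e_{r+1,r})=d_{n,r}$, which your (S3) check only performs for $n=r+2$.

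The missing idea is that you need no new Hall--Witt computation with (S3), (S4) in place of (A2), (A3). Conclusions (C1)--(C3) of Lemma~\ref{tail-action-Dn}, which use only the $a$-relations, do the work, and (S3), (S4) are each applied to a single element.

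For $r\leq n-3$ put $A=(a_r,a_{r+1})$; the case $r=n-2$ is your base case.
\begin{itemize}
\item By (S3), $a_{r+1}^{b_r}=A^{-1}a_{r+1}$, and by (S2), $b_r$ fixes $u_{r+2}$. Hence $u_{r+1}^{b_r}=(A^{-1}a_{r+1},u_{r+2})=(A^{-1},u_{r+2})^{a_{r+1}}u_{r+1}$.
\item This equals $u_r^{-1}u_{r+1}$ because $u_r=(A,u_{r+2})$ by (C3), $(A,u_r)=1$ by (A2) and (C2), and $(u_r,a_{r+1})=1$ by (C1).
\item Then $u_r^{b_r}=(a_r,u_r^{-1}u_{r+1})=u_r$, using $(a_r,b_r)=1$ and $(u_r,a_r)=1$.
\item The case $u_{r-1}^{b_r}$, which your sketch does not flag, is where (S4) enters. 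Write $u_{r-1}=((a_{r-1},a_r),u_{r+1})$ by (C3), and use $((a_{r-1},a_r),b_r)=1$ from (S4) together with $((a_{r-1},a_r),u_r)=1$ from (C2).
\item All other indices follow from (S2), directly or by descending induction.
\end{itemize}

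Once these formulas hold, $(u_j,v_k)=1$ follows by descending induction on $j+k$, starting from the cases $j=n-1$ or $k=n-1$. Expand $u_j=(a_j,u_{j+1})$ when $k\neq j+1$, and $v_k=(b_k,v_{k+1})$ when $k=j+1$. Your separate conjugation argument for $k=j$ is then unnecessary.
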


The relations in~\textup{(S1)} are two copies of
relations~\textup{(R1)}--\textup{(R3)} for $D_n$. The mixed
relations~\textup{(S2)}--\textup{(S4)} involve at most three
distinct adjacent generators.

The following formulas will be used both to verify the mixed
relations and to construct the semidirect-product presentation.

\begin{lemma}\label{row-action-Aplus}
Let $3\leq i\leq n$. For $1\leq j\leq i-1$, write
\[
u_j=d_{i,j},\qquad v_j=e_{i,j}.
\]
For $1\leq r\leq i-2$, write
\[
a_r=d_{r+1,r},\qquad b_r=e_{r+1,r}.
\]
Then, for $1\leq r\leq i-2$ and $1\leq j\leq i-1$, the following
hold:
\begin{enumerate}
\item
\[
\begin{aligned}
(u_j,a_r)&=
\begin{cases}
u_r^{-1},&j=r+1,\\
1,&j\neq r+1,
\end{cases}
&
(v_j,b_r)&=
\begin{cases}
v_r^{-1},&j=r+1,\\
1,&j\neq r+1.
\end{cases}
\end{aligned}
\]

\item
\[
\begin{aligned}
u_j^{b_r}&=
\begin{cases}
u_r^{-1}u_{r+1},&j=r+1,\\
u_j,&j\neq r+1,
\end{cases}
&
v_j^{a_r}&=
\begin{cases}
v_r^{-1}v_{r+1},&j=r+1,\\
v_j,&j\neq r+1.
\end{cases}
\end{aligned}
\]
The two nontrivial cases can equivalently be written
as
\[
(d_{i,r+1}^{-1},e_{r+1,r})=d_{i,r},
\qquad
(e_{i,r+1}^{-1},d_{r+1,r})=e_{i,r}.
\]
\end{enumerate}
\end{lemma}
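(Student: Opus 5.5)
Part (1) needs no new computation. Since $1\leq r\leq i-2$, we have $r<r+1<i$, so Lemma~\ref{nielsen-commutator-relations} applies with $(k,l)=(r+1,r)$. It gives $(d_{i,j},d_{r+1,r})=d_{i,r}^{-1}$ if $j=r+1$ and $1$ otherwise. This is exactly the stated formula for $(u_j,a_r)$, and the formula for $(v_j,b_r)$ is the $e$-version of the same lemma.

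For part (2), I will compute the automorphisms directly on the basis of $F_n$, using the convention $(\alpha\beta)(x)=\alpha(\beta(x))$. Recall that $b_r=e_{r+1,r}$ moves only $x_{r+1}$, by $x_{r+1}\mapsto x_rx_{r+1}$, so $b_r^{-1}$ sends $x_{r+1}\mapsto x_r^{-1}x_{r+1}$. The key point is that $i\geq r+2$, so $b_r^{\pm1}$ fixes $x_i$, and the image of any $x_k$ under $b_r^{\pm1}$ is a word not involving $x_i$.

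I then evaluate $u_j^{b_r}=b_r^{-1}u_jb_r$ on each basis element.
\begin{itemize}
\item For $k\neq i$, the word $b_r(x_k)$ does not involve $x_i$, so it is fixed by $u_j$. Applying $b_r^{-1}$ then returns $x_k$.
\item For $x_i$, we get $x_i\mapsto x_i\mapsto x_ix_j\mapsto x_i\,b_r^{-1}(x_j)$. This equals $x_ix_j$ when $j\neq r+1$, and $x_ix_r^{-1}x_{r+1}$ when $j=r+1$.
\end{itemize}
Hence $u_j^{b_r}=u_j$ for $j\neq r+1$. For $j=r+1$, we compare with $u_r^{-1}u_{r+1}$. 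This fixes every $x_k$ with $k\neq i$, and sends $x_i\mapsto x_ix_{r+1}\mapsto x_ix_r^{-1}x_{r+1}$, since $u_r^{-1}(x_i)=x_ix_r^{-1}$ and $u_r^{-1}$ fixes $x_{r+1}$. So the two automorphisms agree on the basis.

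The equivalent commutator form follows by inverting. From $u_{r+1}^{b_r}=u_r^{-1}u_{r+1}$ we get $(u_{r+1}^{-1})^{b_r}=u_{r+1}^{-1}u_r$. Therefore
\[
(u_{r+1}^{-1},b_r)=u_{r+1}\,(u_{r+1}^{-1})^{b_r}=u_r,
\]
which is $(d_{i,r+1}^{-1},e_{r+1,r})=d_{i,r}$.

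The statements for $v_j^{a_r}$ are obtained by conjugating with $\theta$, which satisfies $\theta d_{p,q}\theta^{-1}=e_{p,q}$. Since $\theta^2=1$, the same conjugation also sends each $e_{p,q}$ back to $d_{p,q}$, so it interchanges $u_j\leftrightarrow v_j$ and $a_r\leftrightarrow b_r$. Applying it to the identities already proved yields the $v$-versions.

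The only step needing care is the bookkeeping of the composition convention in the conjugation. I would double-check it by the direct basis computation above rather than by manipulating formal identities.
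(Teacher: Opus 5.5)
Your proof is correct and follows essentially the paper's route: part~(1) comes from Lemma~\ref{nielsen-commutator-relations} with $(k,l)=(r+1,r)$, and part~(2) from direct evaluation on the free basis. The differences are cosmetic. You compute $u_j^{b_r}$ uniformly in $j$ and then derive the commutator form, whereas the paper handles $j\neq r+1$ by a commuting argument, computes $(d_{i,r+1}^{-1},e_{r+1,r})$ directly and derives the conjugation formula; you also get the $v$-identities by conjugating with $\theta$, where the paper does a second direct calculation.
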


\begin{proof}
Since $1\leq r\leq i-2$, we have $1\leq r<r+1<i$. Applying
Lemma~\ref{nielsen-commutator-relations} with $k=r+1$ and $l=r$
therefore gives the formula in part~(1) involving $u_j$
and $a_r$. Conjugation by $\theta$ sends $u_j$ to $v_j$ and $a_r$
to $b_r$. Therefore the formula involving $v_j$ and $b_r$ follows.
This proves part~(1).

We now prove part~(2). Suppose first that $j\neq r+1$.
The automorphism $b_r$ fixes $x_i$ and $x_j$, while $u_j$ fixes
$x_r$ and $x_{r+1}$. It follows directly from their actions on the
free basis that $u_j$ and $b_r$ commute. Similarly, $a_r$ fixes
$x_i$ and $x_j$, while $v_j$ fixes $x_r$ and $x_{r+1}$, so $v_j$
and $a_r$ commute. Therefore, for $j\neq r+1$, $u_j^{b_r}=u_j$ and
$v_j^{a_r}=v_j$.

It remains to consider $j=r+1$. Direct calculation gives
\[
\begin{aligned}
(d_{i,r+1}^{-1},e_{r+1,r})(x_i)
&=d_{i,r+1}e_{r+1,r}^{-1}
  (x_ix_{r+1}^{-1})\\
&=d_{i,r+1}(x_ix_{r+1}^{-1}x_r)
 =x_ix_r,\\
(d_{i,r+1}^{-1},e_{r+1,r})(x_{r+1})
&=d_{i,r+1}e_{r+1,r}^{-1}(x_rx_{r+1})\\
&=d_{i,r+1}(x_{r+1})
 =x_{r+1}.
\end{aligned}
\]
All other basis elements are fixed, so
$(d_{i,r+1}^{-1},e_{r+1,r})=d_{i,r}$.

For the second mixed identity,
\[
\begin{aligned}
(e_{i,r+1}^{-1},d_{r+1,r})(x_i)
&=e_{i,r+1}d_{r+1,r}^{-1}
  (x_{r+1}^{-1}x_i)\\
&=e_{i,r+1}(x_rx_{r+1}^{-1}x_i)
 =x_rx_i,\\
(e_{i,r+1}^{-1},d_{r+1,r})(x_{r+1})
&=e_{i,r+1}d_{r+1,r}^{-1}
  (x_{r+1}x_r)\\
&=e_{i,r+1}(x_{r+1})
 =x_{r+1}.
\end{aligned}
\]
Again, all other basis elements are fixed, and hence
$(e_{i,r+1}^{-1},d_{r+1,r})=e_{i,r}$.

Finally, since $(x^{-1},y)=x(x^y)^{-1}$, the first identity is
equivalent to
\[
u_r=u_{r+1}(u_{r+1}^{b_r})^{-1},
\]
and hence to $u_{r+1}^{b_r}=u_r^{-1}u_{r+1}$. Similarly, the second
identity is equivalent to
\[
v_r=v_{r+1}(v_{r+1}^{a_r})^{-1},
\]
and hence to $v_{r+1}^{a_r}=v_r^{-1}v_{r+1}$. Together with the cases
$j\neq r+1$, this proves part~(2).
\end{proof}

\begin{lemma}\label{Aplus-relations-valid}
Let $n\geq2$. For $1\leq r\leq n-1$, write
\[
a_r=d_{r+1,r},\qquad b_r=e_{r+1,r}.
\]
Then relations~\textup{(S1)}--\textup{(S4)} in
Theorem~\ref{main-adjacent-presentation-Aplus} hold in $A_n^+$.
\end{lemma}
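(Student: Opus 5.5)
The plan is to verify each family separately, using Lemma~\ref{Dn-relations-valid}, Lemma~\ref{row-action-Aplus}, and direct action on the free basis; the case $n=2$ is vacuous except for (S2) with $r=s=1$. \textbf{(S1):} the relations among the $a_r$ are exactly (R1)--(R3), which hold by Lemma~\ref{Dn-relations-valid}. Conjugating by $\theta$ sends each $a_r$ to $b_r$, so the relations among the $b_r$ hold as well.

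\textbf{(S2):} take $|r-s|\neq1$. If $r=s$, then $a_r$ and $b_r$ both alter only $x_{r+1}$, sending it to $x_{r+1}x_r$ and $x_rx_{r+1}$ respectively, and both fix $x_r$. Composing in either order sends $x_{r+1}$ to $x_rx_{r+1}x_r$, so $a_r$ and $b_r$ commute. If $|r-s|\geq2$, then $a_r$ moves only $x_{r+1}$ and its image involves only $x_r,x_{r+1}$, all of which $b_s$ fixes; symmetrically, $b_s$ moves only $x_{s+1}$ using only $x_s,x_{s+1}$, which $a_r$ fixes. Hence the two automorphisms commute.

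\textbf{(S3):} by Lemma~\ref{nielsen-commutator-relations}, $(a_r,a_{r+1})=(d_{r+1,r},d_{r+2,r+1})=d_{r+2,r}$; this step is already carried out in the proof of Lemma~\ref{Dn-relations-valid}. Lemma~\ref{row-action-Aplus}(2) with $i=r+2$ gives $(d_{r+2,r+1}^{-1},e_{r+1,r})=d_{r+2,r}$, which is $(a_{r+1}^{-1},b_r)$. The two sides therefore agree. The second relation follows by conjugating with $\theta$, or directly from the second identity $(e_{i,r+1}^{-1},d_{r+1,r})=e_{i,r}$ together with $(b_r,b_{r+1})=e_{r+2,r}$.

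\textbf{(S4):} since $(a_r,a_{r+1})=d_{r+2,r}$, it suffices to show that $d_{r+2,r}$ commutes with $e_{r+2,r+1}$. This is Lemma~\ref{row-action-Aplus}(2) with $i=r+2$, $j=r$, and $r$ replaced by $r+1$ — not allowed there, since that lemma needs $r+1\leq i-2$. So I would instead check it directly on the basis: $d_{r+2,r}$ multiplies $x_{r+2}$ on the right by $x_r$, while $e_{r+2,r+1}$ multiplies $x_{r+2}$ on the left by $x_{r+1}$. Both fix $x_r$ and $x_{r+1}$, and either composite sends $x_{r+2}$ to $x_{r+1}x_{r+2}x_r$, so they commute. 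The $b$-version follows by applying $\theta$. The only real care needed is at this last step, checking the composition order and the left- versus right-multiplication bookkeeping; everything else is citation.
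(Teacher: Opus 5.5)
Your proof is correct and follows the paper's argument essentially step for step. You get (S1) from Lemma~\ref{Dn-relations-valid} and conjugation by $\theta$, (S2) and (S4) by direct computation on the free basis, and (S3) from Lemma~\ref{nielsen-commutator-relations} together with the mixed identities of Lemma~\ref{row-action-Aplus}(2) at $i=r+2$. The one small deviation is that you obtain the $b$-versions of (S3) and (S4) by conjugating with $\theta$, which is legitimate because $\theta^2=1$, so conjugation by $\theta$ interchanges $d_{i,j}$ and $e_{i,j}$; the paper instead checks the second relation of (S4) directly on $x_{r+2}$.
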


\begin{proof}
The three families of relations in~\textup{(S1)} involving only the
$a_r$ follow from
Theorem~\ref{main-adjacent-presentation}. Conjugation by $\theta$
sends $a_r$ to $b_r$, so the same three relations hold with the
$b_r$ in place of the $a_r$. Thus all the relations
in~\textup{(S1)} hold.

We next prove~\textup{(S2)}. For $1\leq r\leq n-1$, the products
$a_rb_r$ and $b_ra_r$ fix every basis element except possibly
$x_{r+1}$, and
\[
a_rb_r(x_{r+1})
 =x_rx_{r+1}x_r
 =b_ra_r(x_{r+1}).
\]
Hence $(a_r,b_r)=1$.

Suppose now that $1\leq r,s\leq n-1$ and $|r-s|\geq2$.
Both products $a_rb_s$ and $b_sa_r$
fix every basis element other than $x_{r+1}$ and $x_{s+1}$, and
\[
a_rb_s(x_{r+1})
 =x_{r+1}x_r
 =b_sa_r(x_{r+1}),
\]
while
\[
a_rb_s(x_{s+1})
 =x_sx_{s+1}
 =b_sa_r(x_{s+1}).
\]
Therefore $(a_r,b_s)=1$. This proves~\textup{(S2)}, since the
condition $|r-s|\neq1$ consists precisely of the cases $r=s$ and
$|r-s|\geq2$.

To prove~\textup{(S3)}, let $1\leq r\leq n-2$. Then
Lemma~\ref{nielsen-commutator-relations} gives
\[
(a_r,a_{r+1})=d_{r+2,r},
\]
and conjugation by $\theta$ gives
\[
(b_r,b_{r+1})=e_{r+2,r}.
\]
Take $i=r+2$ in Lemma~\ref{row-action-Aplus}. In the notation of
that lemma,
\[
u_{r+1}=a_{r+1},\qquad u_r=d_{r+2,r},
\]
and
\[
v_{r+1}=b_{r+1},\qquad v_r=e_{r+2,r}.
\]
The two equivalent identities in part~(2) of that lemma
therefore give
\[
(a_{r+1}^{-1},b_r)=d_{r+2,r},
\qquad
(b_{r+1}^{-1},a_r)=e_{r+2,r}.
\]
Consequently,
\[
(a_r,a_{r+1})=(a_{r+1}^{-1},b_r)
\]
and
\[
(b_r,b_{r+1})=(b_{r+1}^{-1},a_r).
\]
This proves~\textup{(S3)}.

It remains to prove~\textup{(S4)}. For $1\leq r\leq n-2$,
\[
(a_r,a_{r+1})=d_{r+2,r},
\qquad
(b_r,b_{r+1})=e_{r+2,r}.
\]
The automorphisms $d_{r+2,r}$ and
$b_{r+1}=e_{r+2,r+1}$ fix every basis element except possibly
$x_{r+2}$, and
\[
d_{r+2,r}b_{r+1}(x_{r+2})
 =x_{r+1}x_{r+2}x_r
 =b_{r+1}d_{r+2,r}(x_{r+2}).
\]
Thus
\[
(a_r,a_{r+1},b_{r+1})=1.
\]

Similarly, $e_{r+2,r}$ and
$a_{r+1}=d_{r+2,r+1}$ fix every basis element except possibly
$x_{r+2}$, and
\[
e_{r+2,r}a_{r+1}(x_{r+2})
 =x_rx_{r+2}x_{r+1}
 =a_{r+1}e_{r+2,r}(x_{r+2}).
\]
Hence
\[
(b_r,b_{r+1},a_{r+1})=1.
\]
This proves~\textup{(S4)}.
\end{proof}

\subsection{Conjugation and commutation of recursively defined elements}

\begin{lemma}\label{mixed-action-Aplus}
Let $G$ be a group, let $n\geq3$, and let
\[
a_1,\ldots,a_{n-1},b_1,\ldots,b_{n-1}\in G.
\]
Suppose that the following relations hold:
\[
\begin{aligned}
\textup{(B1)}\quad
&(a_r,a_s)=1
&& (1\leq r<s\leq n-1,\ s-r\geq2),
\\
&(a_r,(a_r,a_{r+1}))=1
&& (1\leq r\leq n-2),
\\
&(a_r,a_{r+1},(a_{r+1},a_{r+2}))=1
&& (1\leq r\leq n-3),
\\
&(b_r,b_s)=1
&& (1\leq r<s\leq n-1,\ s-r\geq2),
\\
&(b_r,(b_r,b_{r+1}))=1
&& (1\leq r\leq n-2),
\\
&(b_r,b_{r+1},(b_{r+1},b_{r+2}))=1
&& (1\leq r\leq n-3),
\\[1mm]
\textup{(B2)}\quad
&(a_r,b_s)=1
&& (1\leq r,s\leq n-1,\ |r-s|\neq1),
\\[1mm]
\textup{(B3)}\quad
&(a_r,a_{r+1})=(a_{r+1}^{-1},b_r)
&& (1\leq r\leq n-2),
\\
&(b_r,b_{r+1})=(b_{r+1}^{-1},a_r)
&& (1\leq r\leq n-2),
\\[1mm]
\textup{(B4)}\quad
&(a_r,a_{r+1},b_{r+1})=1
&& (1\leq r\leq n-2),
\\
&(b_r,b_{r+1},a_{r+1})=1
&& (1\leq r\leq n-2).
\end{aligned}
\]
Set
\[
u_{n-1}=a_{n-1},\qquad v_{n-1}=b_{n-1},
\]
and, for $j=n-2,\ldots,1$, define recursively
\[
u_j=(a_j,u_{j+1}),\qquad
v_j=(b_j,v_{j+1}).
\]
Then, for $1\leq r\leq n-2$ and $1\leq j\leq n-1$, the following
hold:
\begin{enumerate}
\item
\[
u_j^{b_r}=
\begin{cases}
u_r^{-1}u_{r+1},&j=r+1,\\
u_j,&j\neq r+1.
\end{cases}
\]

\item
\[
v_j^{a_r}=
\begin{cases}
v_r^{-1}v_{r+1},&j=r+1,\\
v_j,&j\neq r+1.
\end{cases}
\]
\end{enumerate}
\end{lemma}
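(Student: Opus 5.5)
We first reduce to part~(1). The hypotheses \textup{(B1)}--\textup{(B4)} are invariant under interchanging $a_r\leftrightarrow b_r$ for all $r$: the two halves of \textup{(B1)}, \textup{(B3)} and \textup{(B4)} are exchanged, and \textup{(B2)} is symmetric because the condition $|r-s|\neq1$ is symmetric. This interchange sends $u_j$ to $v_j$, so part~(2) follows from part~(1). We also note that \textup{(B1)} is exactly \textup{(A1)}--\textup{(A3)} of Lemma~\ref{tail-action-Dn} with $m=n$. Hence \textup{(C1)}--\textup{(C3)} are available for the $u_j$. In particular $u_r=(a_r,a_{r+1},u_{r+2})$ for $r\leq n-3$, and $u_r$ commutes with $a_r$ and $a_{r+1}$.

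Throughout write $c_r=(a_r,a_{r+1})$. From \textup{(B3)} and $(x^{-1},y)=x(x^y)^{-1}$ we get $a_{r+1}^{b_r}=c_r^{-1}a_{r+1}$. The cases of part~(1), in order:

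\emph{Case $j\geq r+2$.} Here $u_j\in\langle a_{r+2},\ldots,a_{n-1}\rangle$, and $b_r$ centralizes this subgroup by \textup{(B2)}, so $u_j^{b_r}=u_j$.

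\emph{Case $j=r$.} We have $a_r^{b_r}=a_r$ by \textup{(B2)}. Therefore
\[
c_r^{b_r}=(a_r,c_r^{-1}a_{r+1})=(a_r,a_{r+1})(a_r,c_r^{-1})^{a_{r+1}}=c_r,
\]
because $a_r$ commutes with $c_r$ by \textup{(B1)}. If $r=n-2$, then $u_r=c_r$ and we are done. Otherwise $u_r=(c_r,u_{r+2})$, and both entries are fixed by $b_r$, so $u_r^{b_r}=u_r$.

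\emph{Case $j=r+1$.} If $r=n-2$, the claim is exactly \textup{(B3)}. For $r\leq n-3$ we write $u_{r+1}=(a_{r+1},u_{r+2})$ and use $u_{r+2}^{b_r}=u_{r+2}$. This gives
\[
u_{r+1}^{b_r}=(c_r^{-1}a_{r+1},u_{r+2})=(c_r^{-1},u_{r+2})^{a_{r+1}}\,u_{r+1}.
\]
Since $u_r=(c_r,u_{r+2})$ by \textup{(C3)}, or by definition when $r=n-3$, and since $u_r$ commutes with $a_r,a_{r+1}$ and hence with $c_r$, we obtain
\[
(c_r^{-1},u_{r+2})=\bigl((c_r,u_{r+2})^{-1}\bigr)^{c_r^{-1}}=u_r^{-1}.
\]
Moreover $u_r^{-1}$ is fixed by $a_{r+1}$. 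This yields $u_{r+1}^{b_r}=u_r^{-1}u_{r+1}$.

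\emph{Case $j\leq r-2$.} We use descending induction on $j$, starting from the case $j=r-1$. Here $u_j=(a_j,u_{j+1})$ with $(a_j,b_r)=1$ by \textup{(B2)}, so $b_r$ fixes $u_j$ once it fixes $u_{j+1}$.

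\emph{Case $j=r-1$, with $r\geq2$.} This is the main obstacle, since $a_{r-1}$ and $b_r$ do not commute. Our first attempt is to write $a_{r-1}^{b_r}=a_{r-1}g$, where $g$ is read off from the second relation of \textup{(B3)} with index $r-1$; thus $g$ is built from $f=(b_{r-1},b_r)$. Then
\[
u_{r-1}^{b_r}=(a_{r-1}g,u_r)=u_{r-1}^{\,g}\,(g,u_r),
\]
and it suffices to show that $f$ centralizes $u_r$ and $u_{r-1}$. The relation \textup{(B4)} gives $(f,a_r)=1$, and \textup{(B2)} gives that $f$ commutes with $a_s$ for $s\geq r+2$. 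The remaining point is that $f$ commutes with $u_{r+1}$. Here the formulas for $u_{r+1}^{b_{r-1}^{\pm1}}$ and $u_{r+1}^{b_r^{\pm1}}$ already established give $u_{r+1}^{f}$, except for one occurrence of $u_{r-1}^{b_r}$. We therefore intend to organise the whole argument as a descending induction on $r$, so that the needed instance is already available. If that circularity cannot be resolved, our fallback is to rebracket $u_{r-1}=(a_{r-1},(a_r,u_{r+1}))$ via Lemma~\ref{hallwitt-rebracketing}. We would then apply the Hall--Witt identity to the triple $a_{r-1},a_r,b_r$, using \textup{(B4)}, \textup{(B3)} and the case $j=r$.
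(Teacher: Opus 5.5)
Your reduction of part~(2) to part~(1) via $a_r\leftrightarrow b_r$ is correct, and so are your cases $j\geq r+2$, $j=r+1$ and $j\leq r-2$; these match the paper. Your case $j=r$, via $c_r^{b_r}=c_r$ and $u_r=(c_r,u_{r+2})$, is a valid alternative to the paper's derivation from the case $j=r+1$. (For $r=n-3$ the identity $u_r=(c_r,u_{r+2})$ does not hold by definition, but \textup{(C3)} covers that case.)

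\textbf{The gap is the case $j=r-1$.} You leave it unresolved, and your main plan for it cannot work.

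\emph{Your main plan rests on a false claim.} It needs $(f,u_{r+1})=1$ for $f=(b_{r-1},b_r)$, and this is false. In $A_n^+$ we have $f=e_{r+1,r-1}$ and $u_{r+1}=d_{n,r+1}$, and a direct check gives $u_{r+1}^f=u_{r-1}^{-1}u_{r+1}$. Carrying out the computation you describe gives the same thing: $u_{r+1}^f=(u_{r-1}^{b_r})^{-1}u_{r+1}$. So $(f,u_{r+1})=1$ would force $u_{r-1}^{b_r}=1$.

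\emph{Induction on $r$ does not break the circularity.} The missing quantity $u_{r-1}^{b_r}$ is the very statement being proved, for the same $r$. Worse, once $(f,u_r)=1$ is known, your identity $u_{r-1}^{b_r}=u_{r-1}^{\,g}$ with $g=f^{b_r}$ makes $(f,u_{r-1})=1$ equivalent to $u_{r-1}^{b_r}=u_{r-1}$. So the proposed reduction gains nothing.

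\emph{Your fallback starts correctly but is not carried out.} The tools you name are not the ones required, and no Hall--Witt computation on $a_{r-1},a_r,b_r$ is needed. The facts you need are these. Put $c_{r-1}=(a_{r-1},a_r)$.
\begin{enumerate}
\item \textup{(C3)} of Lemma~\ref{tail-action-Dn} gives $u_{r-1}=(c_{r-1},u_{r+1})$.
\item The first relation in \textup{(B4)}, with index $r-1$, gives $(c_{r-1},b_r)=1$.
\item \textup{(C2)} gives $(c_{r-1},u_r)=1$.
\item The already proved case $j=r+1$ (not $j=r$) gives $u_{r+1}^{b_r}=u_r^{-1}u_{r+1}$.
\end{enumerate}
Together these give
\[
u_{r-1}^{b_r}=(c_{r-1},u_r^{-1}u_{r+1})=(c_{r-1},u_{r+1})\,(c_{r-1},u_r^{-1})^{u_{r+1}}=u_{r-1}.
\]
This is the paper's argument. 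With it in place, your descending induction for $j\leq r-2$ completes part~(1).
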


\begin{proof}
We prove part~(1).

We first prove that, for $1\leq j\leq n-2$,
\begin{equation}\label{eq-mixed-recursion}
u_j=(u_{j+1}^{-1},b_j).
\end{equation}
For $j=n-2$, the first relation in~(B3) gives
\[
u_{n-2}
=(a_{n-2},a_{n-1})
=(a_{n-1}^{-1},b_{n-2})
=(u_{n-1}^{-1},b_{n-2}).
\]

Now let $1\leq j\leq n-3$, and write
$A=(a_j,a_{j+1})$ and $z=u_{j+2}$.
Lemma~\ref{tail-action-Dn}, applied with $m=n$ to
$a_1,\ldots,a_{n-1}$, gives
\[
u_j=(A,z),\qquad
(A,u_{j+1})=1,\qquad
(u_j,a_{j+1})=1.
\]
The relation $(a_j,A)=1$ in~(B1), together with
$(A,u_{j+1})=1$, shows that $A$ commutes with both entries in
$u_j=(a_j,u_{j+1})$. Hence $(A,u_j)=1$. Since
$(A^{-1},z)=(z,A)^{A^{-1}}$, we obtain
\[
(A^{-1},z)
=(z,A)^{A^{-1}}
=(u_j^{-1})^{A^{-1}}
=u_j^{-1}.
\]

The first relation in~(B3) gives
$A=(a_{j+1}^{-1},b_j)$. Since
$(x^{-1},y)=x(x^y)^{-1}$, this becomes
$A=a_{j+1}(a_{j+1}^{b_j})^{-1}$, and therefore
$a_{j+1}^{b_j}=A^{-1}a_{j+1}$.
Moreover,
$z=u_{j+2}\in\langle a_{j+2},\ldots,a_{n-1}\rangle$, and every
generator of this subgroup commutes with $b_j$ by~(B2). Thus
$z^{b_j}=z$.

Using $(xy,z)=(x,z)^y(y,z)$, we now obtain
\[
u_{j+1}^{b_j}
=(a_{j+1}^{b_j},z^{b_j})
=(A^{-1}a_{j+1},z)
=(A^{-1},z)^{a_{j+1}}(a_{j+1},z)
=u_j^{-1}u_{j+1}.
\]
In the last equality, we used $(u_j,a_{j+1})=1$ and
$u_{j+1}=(a_{j+1},z)$. Consequently,
\[
(u_{j+1}^{-1},b_j)
=u_{j+1}(u_{j+1}^{b_j})^{-1}
=u_{j+1}(u_j^{-1}u_{j+1})^{-1}
=u_j.
\]
This proves~\eqref{eq-mixed-recursion}.

We now determine the action of $b_r$. If $j=r+1$, then
rewriting~\eqref{eq-mixed-recursion} gives
$u_{r+1}^{b_r}=u_r^{-1}u_{r+1}$.

If $j\geq r+2$, then
$u_j\in\langle a_j,\ldots,a_{n-1}\rangle$. Every generator of this
subgroup commutes with $b_r$ by~(B2), and hence
$u_j^{b_r}=u_j$.

Suppose that $j=r$. Since $(a_r,b_r)=1$ by~(B2), we have
\[
u_r^{b_r}
=(a_r,u_{r+1})^{b_r}
=(a_r,u_r^{-1}u_{r+1}).
\]
Lemma~\ref{tail-action-Dn} gives $(u_r,a_r)=1$, so
$(a_r,u_r^{-1}u_{r+1})=(a_r,u_{r+1})=u_r$. Thus
$u_r^{b_r}=u_r$.

Suppose next that $r\geq2$ and $j=r-1$. Write
$A=(a_{r-1},a_r)$. Lemma~\ref{tail-action-Dn} gives
\[
u_{r-1}=(A,u_{r+1}),
\qquad
(A,u_r)=1.
\]
The first relation in~(B4), with index $r-1$, gives
$(A,b_r)=1$. Therefore
\[
u_{r-1}^{b_r}
=(A,u_{r+1})^{b_r}
=(A,u_r^{-1}u_{r+1})
=(A,u_{r+1})
=u_{r-1}.
\]

Finally, suppose that $1\leq j\leq r-2$. Then $r\geq3$, and the
preceding case gives $u_{r-1}^{b_r}=u_{r-1}$. We proceed by descending
induction. If $b_r$ fixes $u_{k+1}$ for some
$1\leq k\leq r-2$, then $(a_k,b_r)=1$ by~(B2).
Hence $b_r$ fixes both entries in $u_k=(a_k,u_{k+1})$, and therefore
fixes $u_k$. It follows that, for $1\leq j\leq r-2$,
$u_j^{b_r}=u_j$.

This proves part~(1). The same argument, applied to the
$b$-generators and the words $v_j$, and using the second relations
in~(B3) and~(B4) in place of the first, proves part~(2).
\end{proof}

\begin{lemma}\label{commuting-last-row-Aplus}
Under the hypotheses and notation of
Lemma~\ref{mixed-action-Aplus},
$\langle u_1,\ldots,u_{n-1}\rangle$ and $\langle v_1,\ldots,v_{n-1}\rangle$
centralize each other. Equivalently,
\[
(u_j,v_k)=1
\qquad
(1\leq j,k\leq n-1).
\]
\end{lemma}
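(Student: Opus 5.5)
The plan is to prove the stronger statement that each $v_k$ centralizes the whole subgroup $U=\langle u_1,\ldots,u_{n-1}\rangle$. I would do this by descending induction on $k$, starting from $v_{n-1}=b_{n-1}$ and passing from $v_{k+1}$ to $v_k$ by conjugation with $b_k$. The key input is that conjugation by each $b_r$ with $1\leq r\leq n-2$ maps $U$ onto itself, which Lemma~\ref{mixed-action-Aplus}(1) provides. Once every $v_k$ centralizes $U$, the subgroups $\langle u_j\rangle$ and $\langle v_k\rangle$ centralize each other.

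\emph{Base case $k=n-1$.} I will show that $b_{n-1}$ commutes with every $u_j$, again by descending induction on $j$.
\begin{itemize}
\item[$\cdot$] For $j=n-1$, we have $u_{n-1}=a_{n-1}$, and $(a_{n-1},b_{n-1})=1$ by (B2) with $r=s$.
\item[$\cdot$] For $j=n-2$, we have $u_{n-2}=(a_{n-2},a_{n-1})$. The first relation of (B4) with $r=n-2$ is exactly $(a_{n-2},a_{n-1},b_{n-1})=1$.
\item[$\cdot$] For $j\leq n-3$, suppose $b_{n-1}$ commutes with $u_{j+1}$. Since $|j-(n-1)|\geq2$, (B2) gives $(a_j,b_{n-1})=1$. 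So $b_{n-1}$ commutes with both entries of $u_j=(a_j,u_{j+1})$, and hence with $u_j$.
\end{itemize}
Thus $v_{n-1}$ centralizes $U$.

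\emph{The conjugation step.} I claim that $U^{b_r}=U$ for $1\leq r\leq n-2$. Lemma~\ref{mixed-action-Aplus}(1) says that $b_r$ fixes $u_j$ for $j\neq r+1$ and sends $u_{r+1}$ to $u_r^{-1}u_{r+1}$. Hence $U^{b_r}\subseteq U$. Because $u_r$ is fixed by $b_r$, the relation inverts to $u_{r+1}^{b_r^{-1}}=u_ru_{r+1}$. So $U^{b_r^{-1}}\subseteq U$ as well, and therefore $U^{b_r}=U$.

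\emph{Inductive step.} Let $1\leq k\leq n-2$ and assume $v_{k+1}$ centralizes $U$. Then $v_{k+1}^{b_k}$ centralizes $U^{b_k}=U$. Writing
\[
v_k=(b_k,v_{k+1})=\bigl(v_{k+1}^{-1}\bigr)^{b_k}v_{k+1},
\]
we see that $v_k$ is a product of elements centralizing $U$, so $v_k$ centralizes $U$. This completes the descending induction, and with it the proof that $(u_j,v_k)=1$ for all $1\leq j,k\leq n-1$.

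The only step requiring care is the conjugation step. One must check that the automorphism of $G$ induced by $b_r$ restricts to a bijection of $U$, and not merely that it maps the generators of $U$ into $U$. This is exactly why the fact that $b_r$ fixes $u_r$ is needed. The rest is bookkeeping with (B2) and (B4). A symmetric argument, using part~(2) of Lemma~\ref{mixed-action-Aplus}, would work equally well but is not needed.
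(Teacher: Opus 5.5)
Your proof is correct, but it takes a different route from the paper's.

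\textbf{The paper's argument.} The paper proves $(u_j,v_k)=1$ element by element, using two base cases: $b_{n-1}$ centralizes every $u_j$, and symmetrically $a_{n-1}$ centralizes every $v_k$. It then runs a strong descending induction on $j+k$ with two cases.
\begin{enumerate}
\item If $k\neq j+1$, part~(2) of Lemma~\ref{mixed-action-Aplus} gives $v_k^{a_j}=v_k$, so $v_k$ commutes with both entries of $u_j=(a_j,u_{j+1})$.
\item If $k=j+1$, part~(1) gives $u_j^{b_k}=u_j$, so $u_j$ commutes with both entries of $v_k=(b_k,v_{k+1})$.
\end{enumerate}
It uses only the ``fixed'' cases of both parts of that lemma and never the nontrivial formula $u_{r+1}^{b_r}=u_r^{-1}u_{r+1}$.

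\textbf{Your argument.} You use the full content of part~(1) to show that each $b_r$ normalizes $U=\langle u_1,\ldots,u_{n-1}\rangle$. You correctly check that conjugation by $b_r$ is onto $U$, not merely into it, via $u_r^{b_r}=u_r$. Hence the centralizer $C_G(U)$ is stable under conjugation by each $b_r$. The identity $v_k=(v_{k+1}^{-1})^{b_k}v_{k+1}$ then carries membership in $C_G(U)$ down from $v_{n-1}=b_{n-1}$.

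\textbf{Comparison.} Your approach needs only one base case, only part~(1), and a one-variable induction. It is also more conceptual: every $v_k$ lies in the subgroup generated by conjugates of $b_{n-1}$ under $\langle b_1,\ldots,b_{n-2}\rangle$, and this subgroup normalizes $U$. The paper's argument is symmetric in the $a$'s and $b$'s and purely local, at the cost of a double induction. Since part~(2) is needed anyway in the proof of Theorem~\ref{main-adjacent-presentation-Aplus}, the saving is confined to this lemma's proof.
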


\begin{proof}
We first prove that $b_{n-1}$ commutes with every $u_j$. By~(B2),
it commutes with $u_{n-1}=a_{n-1}$, and the first relation
in~(B4), with index $n-2$, gives
\[
(u_{n-2},b_{n-1})
=(a_{n-2},a_{n-1},b_{n-1})
=1.
\]
Starting with $u_{n-2}$, we proceed by descending induction. If
$b_{n-1}$ commutes with $u_{j+1}$ for some
$1\leq j\leq n-3$, then it also commutes with $a_j$ by~(B2).
It therefore commutes with
$u_j=(a_j,u_{j+1})$. Thus $b_{n-1}$ commutes with every $u_j$.

The same argument, using the second relation in~(B4) in place of
the first, shows that $a_{n-1}$ commutes with every $v_k$.
Consequently, if $j=n-1$ or $k=n-1$, then
$(u_j,v_k)=1$.

We prove the remaining cases by strong descending induction on
$j+k$. Let $1\leq j,k\leq n-2$, and assume that the assertion holds
for all pairs $(p,q)$ with $1\leq p,q\leq n-1$ and
$p+q>j+k$.

Suppose first that $k\neq j+1$. Applying part~(2) of
Lemma~\ref{mixed-action-Aplus} with $r=j$ gives
$v_k^{a_j}=v_k$, so $a_j$ commutes with $v_k$. The induction
hypothesis, applied to the pair $(j+1,k)$, gives
$(u_{j+1},v_k)=1$. Hence $v_k$ commutes with both entries in
$u_j=(a_j,u_{j+1})$, and therefore $(u_j,v_k)=1$.

It remains to consider $k=j+1$. Since $k\leq n-2$, we have
$v_k=(b_k,v_{k+1})$. Applying part~(1) of
Lemma~\ref{mixed-action-Aplus} with $r=k$ gives
$u_j^{b_k}=u_j$, because $j=k-1\neq k+1$. Thus $u_j$ commutes with
$b_k$. The induction hypothesis, applied to the pair $(j,k+1)$,
gives $(u_j,v_{k+1})=1$. Therefore $u_j$ commutes with both entries
in $v_k=(b_k,v_{k+1})$, and hence $(u_j,v_k)=1$. This completes the
descending induction.
\end{proof}

\subsection{The presentation theorem for $A_n^+$}

\begin{proof}[Proof of Theorem~\ref{main-adjacent-presentation-Aplus}]
By Lemma~\ref{adjacent-generators},
$a_1,\ldots,a_{n-1},b_1,\ldots,b_{n-1}$ generate $A_n^+$, and by
Lemma~\ref{Aplus-relations-valid}, they satisfy
relations~\textup{(S1)}--\textup{(S4)}. It remains to prove that
these relations are sufficient.

The relations in~\textup{(S1)} involving the $a$-generators are
relations~\textup{(R1)}--\textup{(R3)}. Hence
Theorem~\ref{main-adjacent-presentation} gives a presentation of
$D_n$ on $a_1,\ldots,a_{n-1}$. Conjugation by $\theta$ maps $D_n$
isomorphically onto $D_n^{\ell}$ and sends $a_r$ to $b_r$. Hence
the relations in~\textup{(S1)} involving the $b$-generators give a
presentation of $D_n^{\ell}$ on $b_1,\ldots,b_{n-1}$. We prove that
adjoining the mixed relations~\textup{(S2)}--\textup{(S4)} gives a
presentation of $A_n^+$.

For $n=2$, the presentation in the statement reduces to
$\langle a_1,b_1\mid (a_1,b_1)=1\rangle$. Since
$A_2^+=U_2\times H_2\cong\mathbb Z^2$, the result follows in this
case.

Assume that $n\geq3$ and that the theorem holds for $A_{n-1}^+$.
For $1\leq j\leq n-1$, write $u_j=d_{n,j}$ and $v_j=e_{n,j}$. By
Proposition~\ref{triangular-row-decompositions},
\[
A_n^+=(U_n\times H_n)\rtimes A_{n-1}^+,
\]
where $U_n$ is freely generated by $u_1,\ldots,u_{n-1}$, $H_n$ is
freely generated by $v_1,\ldots,v_{n-1}$, and $(u_j,v_k)=1$ for
$1\leq j,k\leq n-1$. Parts~(1) and~(2) of Lemma~\ref{row-action-Aplus}, with $i=n$, give the conjugation relations between the generators of
$U_n\times H_n$ and the adjacent generators of $A_{n-1}^+$.

The standard presentation of a semidirect product therefore gives a
presentation of $A_n^+$ by adjoining to the presentation of
$A_{n-1}^+$ the generators $u_1,\ldots,u_{n-1}$ and
$v_1,\ldots,v_{n-1}$. The additional defining relations are those
in parts~(1) and~(2) of Lemma~\ref{row-action-Aplus}, with $i=n$,
together with
\[
(u_j,v_k)=1
\qquad (1\leq j,k\leq n-1).
\]

For each $1\leq r\leq n-2$, the two nontrivial relations in
part~(1) of Lemma~\ref{row-action-Aplus}, with $i=n$, are equivalent
to
\[
u_r=(a_r,u_{r+1}),
\qquad
v_r=(b_r,v_{r+1}).
\]
Beginning with $j=n-2$ and ending with $j=1$, we use these relations
successively to eliminate $u_j$ and $v_j$. Rename $u_{n-1}$ as
$a_{n-1}$ and $v_{n-1}$ as $b_{n-1}$. We continue to write
$u_{n-1}=a_{n-1}$ and $v_{n-1}=b_{n-1}$. For
$1\leq j\leq n-2$, let
\[
u_j=(a_j,u_{j+1}),\qquad
v_j=(b_j,v_{j+1})
\]
denote the resulting recursive words in the remaining generators.

The preceding eliminations are Tietze transformations, so the current
presentation still defines $A_n^+$. Consequently, every relation
among the current generators that holds in $A_n^+$ is a consequence
of the current relators and may be added by a Tietze transformation.
By Lemma~\ref{Aplus-relations-valid}, we may therefore add all the
relations in~\textup{(S1)}--\textup{(S4)} that involve $a_{n-1}$ or
$b_{n-1}$. All other relations in~\textup{(S1)}--\textup{(S4)}
already occur in the presentation of $A_{n-1}^+$.

The group defined by the current presentation now satisfies
relations~\textup{(S1)}--\textup{(S4)}. These are precisely
relations~(B1)--(B4) in the hypotheses of
Lemma~\ref{mixed-action-Aplus}. The relations remaining from
part~(1) of Lemma~\ref{row-action-Aplus}, with $i=n$, are
\[
(u_j,a_r)=1,\qquad
(v_j,b_r)=1
\qquad
(1\leq r\leq n-2,\ 1\leq j\leq n-1,\ j\neq r+1).
\]
The relations in~\textup{(S1)} ensure that the hypotheses of
Lemma~\ref{tail-action-Dn} hold separately for the $a$-generators
and the $b$-generators. Conclusion~\textup{(C1)} of that lemma,
applied with $m=n$, shows that these relations follow
from~\textup{(S1)}. The relations in part~(2) of
Lemma~\ref{row-action-Aplus}, with $i=n$, follow from parts~(1)
and~(2) of Lemma~\ref{mixed-action-Aplus}, and
Lemma~\ref{commuting-last-row-Aplus} gives
\[
(u_j,v_k)=1
\qquad (1\leq j,k\leq n-1).
\]

Consequently, all the remaining relations introduced by the
semidirect-product presentation may be deleted by Tietze
transformations. The remaining relations are exactly
relations~\textup{(S1)}--\textup{(S4)}.
\end{proof}

\begin{corollary}\label{minimal-generating-set-Aplus}
For every $n\geq2$,
$(A_n^+)^{\mathrm{ab}}\cong\mathbb Z^{2(n-1)}$. Every generating set
of $A_n^+$ contains at least $2(n-1)$ elements. The adjacent
automorphisms
\[
d_{2,1},d_{3,2},\ldots,d_{n,n-1},
\qquad
e_{2,1},e_{3,2},\ldots,e_{n,n-1}
\]
form a generating set of minimum cardinality.
\end{corollary}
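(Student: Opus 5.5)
The plan mirrors the proof of Corollary~\ref{minimal-generating-set}, with Theorem~\ref{main-adjacent-presentation-Aplus} in place of Theorem~\ref{main-adjacent-presentation}. The first step is to observe that every defining relation in~\textup{(S1)}, \textup{(S2)} and~\textup{(S4)} has the form $w=1$ where $w$ is an (iterated) commutator. Such a relator has zero exponent sum in each generator and so lies in the derived subgroup of the free group on $a_1,\ldots,a_{n-1},b_1,\ldots,b_{n-1}$.

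The only relations not of this form are those in~\textup{(S3)}, namely $(a_r,a_{r+1})=(a_{r+1}^{-1},b_r)$ and its $b$-analogue. Each can be rewritten as the relator $(a_r,a_{r+1})(a_{r+1}^{-1},b_r)^{-1}$, which is a product of two commutators. Hence its exponent sum in every generator is also zero. This is the only point needing care: the relator is not a single commutator, but abelianization only sees exponent sums, so this does not matter.

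Consequently, abelianizing the presentation of Theorem~\ref{main-adjacent-presentation-Aplus} gives the free abelian group on the images of the $2(n-1)$ generators, and
\[
(A_n^+)^{\mathrm{ab}}\cong\mathbb Z^{2(n-1)}.
\]
The lower bound then follows as before. Any generating set of $A_n^+$ maps onto a generating set of $\mathbb Z^{2(n-1)}$. Since the rank of $\mathbb Z^{2(n-1)}$ is $2(n-1)$, seen for instance by tensoring with $\mathbb Q$, every generating set of $A_n^+$ contains at least $2(n-1)$ elements.

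Finally, Lemma~\ref{adjacent-generators} shows that the $2(n-1)$ adjacent automorphisms $d_{r+1,r},e_{r+1,r}$, for $1\leq r\leq n-1$, generate $A_n^+$. They are pairwise distinct, because their images in the abelianization are distinct basis vectors. So they form a generating set of minimum cardinality.
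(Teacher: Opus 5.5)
Your proposal is correct and follows essentially the same route as the paper: all relators of Theorem~\ref{main-adjacent-presentation-Aplus}, including the (S3) relators written as a product of two commutators, lie in the derived subgroup, so the abelianization is free abelian of rank $2(n-1)$, and the lower bound and minimality follow. Your remarks on the (S3) relators and on the distinctness of the $2(n-1)$ generators make explicit two points that the paper leaves implicit.
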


\begin{proof}
When the defining relations in
Theorem~\ref{main-adjacent-presentation-Aplus} are written as
relators, all of them lie in the derived subgroup of the free group on
the adjacent generators. Hence they impose no relations on the
abelianization. Therefore
\[
(A_n^+)^{\mathrm{ab}}\cong\mathbb Z^{2(n-1)}.
\]
It follows that every generating set of $A_n^+$ contains at least
$2(n-1)$ elements, and the adjacent generating set has minimum
cardinality.
\end{proof}

\begin{corollary}\label{all-commutator-presentation-Aplus}
For every $n\geq2$, the group $A_n^+$ admits a presentation on the
$2(n-1)$ adjacent generators
$a_1,\ldots,a_{n-1},b_1,\ldots,b_{n-1}$ in which every defining
relator is a single commutator.
\end{corollary}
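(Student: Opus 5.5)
Starting from the presentation in Theorem~\ref{main-adjacent-presentation-Aplus}, only the relations in~\textup{(S3)} fail to be single commutators. Every relator in~\textup{(S1)}, \textup{(S2)} and~\textup{(S4)} is already a left-normed or nested commutator, and so is a single commutator of the form $(X,Y)$ for suitable words $X,Y$; for example, $(a_r,a_{r+1},(a_{r+1},a_{r+2}))=((a_r,a_{r+1}),(a_{r+1},a_{r+2}))$. So I will replace each relation in~\textup{(S3)} by an equivalent single-commutator relation, modulo the others, and then appeal to Tietze transformations.

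The natural candidate comes from the verification in Lemma~\ref{Aplus-relations-valid}. There $(a_{r+1}^{-1},b_r)=a_{r+1}(a_{r+1}^{b_r})^{-1}$, so the first relation in~\textup{(S3)} is equivalent to
\[
a_{r+1}^{b_r}=(a_r,a_{r+1})^{-1}a_{r+1}=(a_{r+1},a_r)\,a_{r+1}.
\]
I will look for a rewriting of this as the vanishing of a commutator. Consider the element $c_r=a_rb_r^{-1}$ (or $b_r^{-1}a_r$); by~\textup{(S2)} the factors commute. Conjugation of $a_{r+1}$ by $a_r$ gives $a_{r+1}^{a_r}=a_r^{-1}a_{r+1}a_r=(a_r,a_{r+1}^{-1})\,a_{r+1}$. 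In $A_n^+$, $a_{r+1}^{b_r}$ and $a_{r+1}^{a_r}$ should coincide, because $a_r=d_{r+1,r}$ and $b_r=e_{r+1,r}$ differ by the inner-type automorphism conjugating $x_{r+1}$ by $x_r$. Since $a_{r+2,\dots}$ only sees $x_{r+1}$ up to that conjugation, I will verify $a_{r+1}^{a_r}=a_{r+1}^{b_r}$ directly on the basis.

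If this holds, the relation becomes $(a_rb_r^{-1},a_{r+1})=1$, which is a single commutator. Similarly, the second relation in~\textup{(S3)} becomes $(b_ra_r^{-1},b_{r+1})=1$. Concretely, I will check on $x_{r+2}$ that $a_r^{-1}a_{r+1}a_r$ and $b_r^{-1}a_{r+1}b_r$ agree. Both map $x_{r+2}\mapsto x_{r+2}\,\phi(x_{r+1})$, where $\phi$ is $a_r^{-1}$ or $b_r^{-1}$, so the outputs are $x_{r+2}x_{r+1}x_r^{-1}$ versus $x_{r+2}x_r^{-1}x_{r+1}$. These differ, so the naive guess fails, and I must adjust.

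Instead I will use the combination that does work: $x_r^{-1}$ conjugation is realized by $a_rb_r^{-1}$ acting on $x_{r+1}$ only, while $a_{r+1}$ acts through $x_{r+1}$ as a right factor. I will therefore test the product $a_{r+1}b_{r+1}^{-1}$-type elements against $a_rb_r^{-1}$, and, failing a clean identity, fall back to the general trick: a relation $U=V$ with $U=(a_r,a_{r+1})$ and $V=(a_{r+1}^{-1},b_r)$ is equivalent to $(U^{-1}V,\,w)=1$ only if something forces the element to be trivial. That fallback is not valid in general. The main obstacle is therefore finding a genuine single-commutator reformulation of~\textup{(S3)}.

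My first attempt will be to express~\textup{(S3)} as $(a_{r+1},\,b_r a_r^{-1}\cdot t)=1$ for a correcting word $t$ built from $a_r,a_{r+1}$, found by computing $a_{r+1}^{b_ra_r^{-1}}$ on $x_{r+2}$. This equals $x_{r+2}\,x_r x_{r+1}x_r^{-1}$. I will then seek $t$ with $a_{r+1}^{t}$ equal to that. Once found, I will check equivalence with~\textup{(S3)} modulo~\textup{(S1)}, \textup{(S2)}, substitute, and conclude by Tietze transformations.
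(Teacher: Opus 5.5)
\paragraph{Review.} Your reduction is correct and is the same as the paper's. The relators in \textup{(S1)}, \textup{(S2)} and \textup{(S4)} are already of the form $(X,Y)$, so everything comes down to replacing each relation in \textup{(S3)} by a single-commutator relation with the same consequences.

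However, your proposal never produces that replacement, and this step is the whole content of the corollary. The guess $a_{r+1}^{a_r}=a_{r+1}^{b_r}$ is false, as you correctly verify on $x_{r+2}$. The fallback $(U^{-1}V,w)=1$ is invalid, as you note. The final plan stops at ``seek a correcting word $t$'' without exhibiting one or checking the equivalence.

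Moreover, choosing $t$ merely so that two automorphisms agree on $x_{r+2}$ only shows that the new relation holds in $A_n^+$. It does not show that it implies \textup{(S3)} modulo the other relations. For example, $t=(a_r,a_{r+1})^{-1}$ gives a true relation. But the obvious derivation of \textup{(S3)} from it also uses $((a_r,a_{r+1}),b_r)=1$, which is not among \textup{(S1)}, \textup{(S2)}, \textup{(S4)}.

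There is also a minor slip. With the paper's conventions, $a_{r+1}^{b_ra_r^{-1}}$ sends $x_{r+2}$ to $x_{r+2}x_r^{-1}x_{r+1}x_r$. The value you wrote is that of $a_{r+1}^{a_rb_r^{-1}}$.

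\paragraph{The missing idea.} It is an identity in the free group, so no other relations are needed at all. For arbitrary $x,y,z$,
\[
(y^x,\,x^{-1}zy^{-1})=x^{-1}y^{-1}xyz^{-1}yzy^{-1}=(x,y)\,(y^{-1},z)^{-1}.
\]
Take $(x,y,z)=(a_r,a_{r+1},b_r)$ and $(x,y,z)=(b_r,b_{r+1},a_r)$. This shows that the relators of the two families in \textup{(S3)} are freely equal to the single commutators $(a_{r+1}^{a_r},a_r^{-1}b_ra_{r+1}^{-1})$ and $(b_{r+1}^{b_r},b_r^{-1}a_rb_{r+1}^{-1})$. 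So the replacement is a trivial Tietze move.

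Your own ansatz would have reached this. With $t=a_ra_{r+1}^{-1}a_r^{-1}$ you get $(a_{r+1},b_ra_{r+1}^{-1}a_r^{-1})=1$. Conjugating this commutator by $a_r$ gives exactly $(a_{r+1}^{a_r},a_r^{-1}b_ra_{r+1}^{-1})$, which is freely equal to the (S3) relator.
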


\begin{proof}
For arbitrary group elements $x,y,z$, direct calculation gives
\[
\begin{aligned}
(y^x,x^{-1}zy^{-1})
&=(x^{-1}yx)^{-1}
  (x^{-1}zy^{-1})^{-1}
  (x^{-1}yx)
  (x^{-1}zy^{-1})\\
&=(x^{-1}y^{-1}x)
  (yz^{-1}x)
  (x^{-1}yx)
  (x^{-1}zy^{-1})\\
&=x^{-1}y^{-1}xyz^{-1}yzy^{-1}\\
&=(x,y)(y^{-1},z)^{-1}.
\end{aligned}
\]
Consequently, $(x,y)=(y^{-1},z)$ if and only if
$(y^x,x^{-1}zy^{-1})=1$.

For the first relation in~\textup{(S3)}, take
$(x,y,z)=(a_r,a_{r+1},b_r)$; for the second relation
in~\textup{(S3)}, take
$(x,y,z)=(b_r,b_{r+1},a_r)$. Thus, for $1\leq r\leq n-2$, these two
relations may be replaced by
\[
\begin{aligned}
(a_{r+1}^{a_r},a_r^{-1}b_ra_{r+1}^{-1})&=1,\\
(b_{r+1}^{b_r},b_r^{-1}a_rb_{r+1}^{-1})&=1.
\end{aligned}
\]
Every other defining relation in
Theorem~\ref{main-adjacent-presentation-Aplus} is already of the form
$(p,q)=1$ for suitable words $p$ and $q$. Hence every defining
relator can be chosen to be a single commutator.
\end{proof}

\end{document}